\DeclareMathOperator{\rk}{rk}
\DeclareMathOperator{\CRdim}{CR-dim}
\DeclareMathOperator{\CRcodim}{CR-codim}
\newcommand{\bC}{\mathbb{C}}
\newcommand{\bR}{\mathbb{R}}
\DeclareMathOperator\tr{tr}
\newtheorem{Th}{Theorem}[section]
\newtheorem{Prop}[Th]{Proposition}
\newtheorem{Cor}[Th]{Corollary}
\newtheorem{Lem}[Th]{Lemma}
\theoremstyle{definition}
\newtheorem{Def}[Th]{Definition}
\newtheorem{Rem}[Th]{Remark}
\newtheorem{Not}[Th]{Notation}
\newcommand{\bt}{\begin{Th}\ \ }
\newcommand{\et}{\end{Th}}
\newcommand{\bp}{\begin{Prop}\ \ }
\newcommand{\ep}{\end{Prop}}
\newcommand{\bc}{\begin{Cor}\ \ }
\newcommand{\ec}{\end{Cor}}
\newcommand{\bl}{\begin{Lem}\ \ }
\newcommand{\el}{\end{Lem}}
\newcommand{\bd}{\begin{Def}\ \ }
\newcommand{\ed}{\end{Def}}
\newcommand{\br}{\begin{Rem}\ \ }
\newcommand{\er}{\end{Rem}}
\newcommand{\arr}{\begin{array}{rlll}}
\newcommand{\ea}{\end{array}}
\numberwithin{equation}{section}
\newcommand{\di}{\mathrm{d}}
\newcommand{\ii}{\mathrm{i}}
\newcommand{\ee}{\mathrm{e}}
\begin{document}
\title[Associated families of immersions of $3$-dim CR manifolds]{Associated families of immersions\\ of three dimensional CR manifolds \\in Euclidean spaces}
\author[A.~Altomani]{Andrea Altomani}
\address[A.~Altomani]{University of Luxembourg\\ Research Unit in Mathematics\\ rue Coudenhove-Kalergi 6\\ L-1359 Luxembourg}
\email{andrea.altomani@uni.lu}
 \author[M.-A.~Lawn]{Marie-Am\'elie Lawn}
\address[M.-A.~Lawn]{Philipps Universit\"at Marburg\\Department of Mathematics FB12\\
Hans-Meerwein-Str.\\
D-35032 Marburg \\Germany}
\email{amelielawn@mathematik.uni-marburg.de}

\begin{abstract}
We consider isometric immersions in arbitrary codimension of three-dimensional strongly pseudoconvex pseudo-hermitian CR manifolds into the Euclidean space $\mathbb{R}^n$ and generalize in a natural way the notion of associated family. We show that the existence of such deformations turns out to be very restrictive and we give a complete classification.
\end{abstract}
\keywords{Isometric immersions, CR-pluriharmonic immersions, strongly pseudoconvex CR manifolds}
\subjclass[2010]{primary: 53C42, secondary: 53A07, 32V10, 53D10}

\date{}
\maketitle
\section{Introduction}
A well-known property of isometric immersions $f: M\rightarrow \mathbb{R}^3$ of minimal surfaces in Euclidean 3-space is the existence of the so called strong associated family, i.e.\ a one-parameter family $f_{\varphi}$, $\varphi\in S^1$, with $f_0=f$,  of isometric immersions ``rotating the differential'' and preserving, at every point, the tangent plane and the Gau\ss{} map. More precisely, denoting by $r_\varphi\colon TM\to TM$ the rotation of the tangent plane of angle $\varphi$, an associated family of immersion is a smooth family $f_\varphi$ satisfying
\[
\psi_\varphi\circ df_{\varphi}(X)=  df_0(r_{\varphi}X),
\]
for all $X\in TM$ and for some parallel bundle isomorphism $\psi_{\varphi}\colon f_{\varphi}^*T\mathbb{R}^n\rightarrow f^*_0T\mathbb{R}^n$.
Moreover it is known that minimality, the existence of an associated family  and the harmonicity of $f$  are three equivalent conditions.

 This definition of strong associated family can be naturally extended without any significant modification to the case where $M$ is a complex manifold of arbitrary dimension and  $f\colon M\to\mathbb{R}^n$ is a pluriconformal immersion, i.e. it is conformal along any complex curve in $M$ (see \cite{E}). The existence of a strong associated family is then equivalent to the pluriharmonicity of the immersion, i.e. to the fact that the restriction of $f$ to any smooth complex curve is minimal in $\mathbb{R}^n$.

A generalization to a larger class of immersions is given by submanifolds with parallel pluri-mean curvature (shortly \emph{ppmc}, see for instance \cite{E,BEFT}). A pluriconformal immersion $f\colon M\rightarrow\mathbb{R}^n$  of a complex manifold $M$ is ppmc if the $(1,1)-$part $A^{1,1}$ of the complexified second fundamental form is parallel with respect to the connection in the normal bundle.

For example, in the case of a Riemann surface $M$, we have $A^{1,1} = g(\,\cdot\,,\,\cdot\,)H$, where $g$ is the metric induced by the immersion and $H = \tr A$ is the mean curvature vector field, thus ppmc immersions correspond to surfaces with constant mean curvature.

Ppmc immersions can also be characterized by the existence of a \emph{weak} associated family $\{f_\varphi\}$in the following sense. Let $f_\varphi$ be a family of pluriconformal immersions of a complex manifold $M$ into $\bR^n$, and $A^{f_{\varphi}}$  their second fundamental forms. Then $f_\varphi$ is a weak associated family if there exists a family of parallel bundle isomorphisms $\iota_{\varphi}\colon N^{\varphi}\rightarrow N^0$, where  $N^{\varphi}$ is the normal bundle of $f_{\varphi}$, such that:
\begin{align*}
\iota_{\varphi}(A^{f_\varphi}(X,Y))=
A^{f_0}(r_{\varphi}X,r_{\varphi}Y),
\end{align*}
for all $X,Y\in TM$. Examples are of course cmc surfaces, as well as the (pluri)minimal submanifolds of $\bR^n$ described above, which are exactly those for which  $A^{1,1}=0$.  In general, ppmc submanifolds are exactly the submanifolds whose Gau\ss{} maps are pluriharmonic.

Our aim is to generalize this notion of associated family to immersions $f\colon M\to\mathbb{R}^n$ of odd dimensional manifolds, and in the present paper we will consider three-dimensional strongly pseudoconvex pseudohermitian CR manifolds $(M, T^{1,0}M,\theta)$. The basic idea is to let $HM=\Re T^{1,0}M\subset TM$ play the role corresponding to the tangent bundle in the complex case.
Strongly pseudoconvex CR manifolds admit a natural family of metrics, the \emph{Webster metrics}, all of them conformally equivalent when restricted to the subbundle $HM$.
We found natural to replace the conformality condition with  the request that $f$ be an isometry with respect to some choice of the Webster metric.
For the definition of associated family, we replace rotations  with maps that act like rotations on $HM$, and fix the characteristic direction.
More in detail, if $\theta$ is a pseudohermitian structure and $T$ its Reeb vector field, for $Z\in T^{1,0}M$ we consider the ``rotations'' $r_\varphi\colon T^\bC M\to T^\bC M$, $\varphi\in\bR$:
 \begin{align*}
r_{\varphi}Z=e^{\ii\varphi}Z,\quad r_{\varphi}\bar{Z}=e^{-\ii\varphi}\bar{Z},\quad r_{\varphi}T=T,
\end{align*}
and we look for associated families of immersions which are \emph{isometric} with respect to the Webster metric on $M$.

It is straightforward to show that strong associated families cannot exist in this context.
We investigate then the existence of weak associated families. It turns out that this case is much more restrictive than the classical one for complex manifolds: in fact we show that the only three dimensional strongly pseudo-convex CR manifolds admitting such deformations are the standard sphere and the affine space, with the pseudohermitian structure of a cylinder in $\bC^2$.

In a previous paper \cite{AL} we proved that the only \emph{CR pluriharmonic} isometric immersions of a three-dimensional CR manifold $M$ into $\bR^4$ are the standard embedding of the sphere and of the cylinder. Thus, in contrast with the complex case, CR pluriharmonicity and existence of an associated family are independent notions.\\

The main part of this work was completed during a stay of the second author at
the Mathematics Research Unit of the University of Luxembourg. She acknowledges with gratitude the generous support.

\section{Three dimensional CR manifolds}

We recall the general definitions of CR manifolds, pseudohermitian structures, and the Webster metric.

\subsection{CR manifolds and pseudohermitian structures}
\begin{Def}
A \emph{CR manifold} is the datum of a smooth manifold $M$, and of a complex subbundle $T^{1,0}M\subset T^\bC M$ of the complexification $T^\bC M=\bC\otimes TM$ of the tangent bundle of $M$, such that
\begin{gather*}
 T^{1,0}M\cap\overline{T^{1,0}M}=0,\\
[\Gamma(T^{1,0}M),\Gamma(T^{1,0}M)]\subset\Gamma(T^{1,0}M).
\end{gather*}

The \emph{CR dimension} of $M$ is $\CRdim M=\rk_\bC T^{1,0}M$, and the \emph{CR codimension} of $M$ is $\CRcodim M=\dim_\bR M-2\CRdim M$.

A CR manifold of codimension one is said to be of \emph{hypersurface type}.
\end{Def}
CR manifolds of CR codimension zero are exactly complex manifolds.

%
%
%


\begin{Def}
A \emph{pseudohermitian structure} on a CR manifold $(M,T^{1,0}M)$ of hypersurface type is a nowhere vanishing $1$-form $\theta$ such that $\theta=\bar\theta$ and $\ker\theta=T^{1,0}M\oplus T^{0,1}M$.

A CR manifold with a pseudohermitian structure is a \emph{pseudohermitian CR manifold}.
\end{Def}
Notice that any two pseudohermitian structures $\theta$, $\theta'$ are related by $\theta'=\lambda\theta$ for some nonvanishing smooth function $\lambda$.

\begin{Def}
Let $(M,T^{1,0}M)$ be a CR manifold of hypersurface type, and $\theta$ a pseudohermitian structure on $M$. The \emph{Levi form} associated to $\theta$ is the hermitian symmetric form on $T^{1,0}M$:
\begin{align*}
\mathcal{L}_{\theta}\colon T^{1,0}M\times T^{1,0}M&\longrightarrow\bC\\
  (Z,W)&\longmapsto\mathcal{L}_{\theta}(Z,W)=\ii\,\di\theta(Z,\bar W).
\end{align*}
If $x$ is a point of $M$ and the Levi form at $x$ is nondegenerate (resp. definite), then $M$ is said to be \emph{Levi nondegenerate at $x$} (resp. \emph{strongly pseudoconvex at $x$}). A CR manifold is \emph{Levi nondegenerate} (resp. \emph{strongly pseudoconvex}) if it is Levi nondegenerate (resp. strongly pseudoconvex) at every point.
\end{Def}

\begin{Rem}
A pseudohermitian structure is a (nondegenerate) contact form on $M$ if and only if the Levi form is nondegenerate.
\end{Rem}

\begin{Rem}
Levi nondegeneracy and strong pseudoconvexity are independent from the choice of the pseudohermitian structure. Moreover, by replacing $\theta$ by $-\theta$ if needed, we can and will assume that a strongly pseudoconvex manifold has a positive definite Levi form.
\end{Rem}

\begin{Def}
Let $(M,T^{1,0}M,\theta)$ be a Levi nondegenerate pseudohermitian CR manifold. The unique vector field $T$ on $M$ such that
\[
\theta(T)=1,\qquad i_T(\di\theta)=0
\]
is the \emph{Reeb vector field}.
\end{Def}


\begin{Def}
Let $(M,T^{1,0}M,\theta)$ be a Levi nondegenerate pseudohermitian CR manifold.  The \emph{Webster pseudo-Riemannian metric} associated to $\theta$ is the symmetric nondegenerate bilinear form
\[ g_{\theta}\colon TM\times TM\longrightarrow\bR \]
defined by
\begin{align*}
g_\theta(X,Y)=\Re\mathcal{L}_\theta(Z,W),\quad g_\theta(X,T)=0,\quad g_\theta(T,T)=1
\end{align*}
where
\[X=\frac{Z+\bar Z}{\sqrt{2}},\quad Y=\frac{W+\bar W}{\sqrt{2}},\quad Z,W\in T^{1,0}M \]
and $T$ is the Reeb vector field.

If $M$ is strongly pseudoconvex and $\theta$ is chosen so that $\mathcal{L}_{\theta}$ is positive definite, then the Webster pseudo-Riemannian metric is positive definite and is called the \emph{Webster metric}.
\end{Def}
\begin{Not}\label{not:product}
We will denote by
\[ \langle\,\cdot\,.\,\cdot\,\rangle_\theta\colon T^\bC M\times T^\bC M\to\bC,\]
the bilinear symmetric extension of the Webster metric $g_\theta$ to  $T^\bC M$,
and we set:
\[ \|U\|^2_\theta=\langle U,\bar U\rangle_\theta, \qquad U\in T^\bC M. \]
We will omit the subscript $\theta$ when the choice of the pseudohermitian structure is clear from the context.
\end{Not}

\subsection{Three-dimensional CR manifolds}

Let $(M, T^{1,0}M, \theta)$ be a three-dimensional Levi nondegenerate pseudohermitian CR manifold. Notice that in the three-dimensional case Levi nondegeneracy is equivalent to strong pseudoconvexity. We always assume that the pseudohermitian structure is chosen in such a way that the Levi form is positive definite.

The bundle $T^{1,0}M$ has complex dimension equal to one, hence locally there exists a complex vector field $Z$ generating $T^{1,0}M$ at every point. Its complex conjugate
$\bar{Z}$ generates $\overline{T^{1,0}M}$.

The Levi form $\mathcal{L}_\theta$ is completely determined by the value
\[\mathcal{L}_{\theta}(Z,Z)=\ii\,\di\theta(Z,\bar Z).\]
By the pseudoconvexity condition, $\mathcal{L}_{\theta}(Z,Z)$ is everywhere positive.
Upon replacing $Z$ with a scalar multiple we can assume that $\mathcal{L}_{\theta}(Z,{Z})=1$.

\begin{Def}
A \emph{pseudohermitian local frame} on a three-dimensional strongly pseudoconvex pseudohermitian CR manifold $(M, T^{1,0}M, \theta)$ is a local frame $(Z, \bar Z, T)$ with $\mathcal{L}_{\theta}(Z,{Z})=1$ and $T$ equal to the Reeb vector field.
\end{Def}


\begin{Rem}
The vectors of a pseudohermitian local frame $(Z,\bar Z, T)$ satisfy:
\begin{align*}
\langle Z,Z\rangle_\theta&=\langle \bar Z,\bar Z\rangle_\theta=0, & \langle Z,T\rangle_\theta&=\langle \bar Z,T\rangle_\theta=0,& \langle Z,\bar Z\rangle_\theta=1.
\end{align*}
Moreover the real vectors
\[ X=\frac{Z+\overline{Z}}{\sqrt 2},\qquad Y=\frac{\ii(Z-\overline{Z})}{\sqrt 2} \]
together with $T$ are a real orthonormal frame for $g_\theta$.
\end{Rem}

\begin{Not}
If $(Z,\bar Z, T)$ is a pseudohermitian local frame of $M$, we denote by $(\zeta,\bar\zeta, \theta)$ the local frame of $T^*{}^\bC M$ dual to $(Z,\bar Z, T)$ and we define three complex valued smooth functions $a$, $b$, and $c$ on $M$ by:
\begin{align}
\ii[Z,\bar Z]&=T+aZ+\bar a \bar Z, & a&=\ii\zeta[Z,\bar Z], \label{eq_for_a}\\
[Z,T]&= bZ+\bar c \bar Z, & b&=\zeta[Z,T], \label{eq_for_b_barc} \\
[\bar Z,T]&=cZ+\bar b\bar Z, & c&=\zeta[\bar Z,T]. \label{eq_for_c_barb}
\end{align}
\end{Not}
Straightforward computations yield:
\begin{align*}
\di\theta&=\ii\,\zeta\wedge\bar\zeta, \\
\di\zeta&=\ii a \,\zeta\wedge\bar\zeta-b\,\zeta\wedge\theta-c\,\bar\zeta\wedge\theta, \\
\di\bar\zeta&=\ii\bar a\,\zeta\wedge\bar\zeta-\bar c\,\zeta\wedge\theta-\bar b\,\bar\zeta\wedge\theta.
\end{align*}
Moreover, from the Jacobi identity for $Z$, $\bar{Z}$ and $T$ we have
\begin{align}\label{eq_jacobi}
b+\bar b&=0,&
\ii Zc-\ii\bar Zb+Ta-ab-\bar ac&=0.
\end{align}

\begin{Rem}\label{rem:change_of_frame}
If $(Z,\bar Z, T)$ is a pseudohermitian local frame of $M$, then any other pseudohermitian local frame $(Z',\bar Z', T')$ can be locally obtained from $(Z,\bar Z, T)$ as:
\begin{align*}
{Z'}&=\ee^{-\ii v}Z,  & {\bar Z'}&=\ee^{\ii v}\bar Z, & {T'}&=T, \\
 {\bar\zeta'}&=\ee^{-\ii v}\bar\zeta, & {\zeta'}&=\ee^{\ii v}\zeta, & {\theta'}&=\theta.
\end{align*}
for some real valued function $v$ on $M$. The functions $a$, $b$, and $c$ in the new frame are then
\begin{align*}
 a'&=\ee^{\ii v}(a- \bar Zv), &
 b'&=b+\ii Tv, &
 c'&=\ee^{2\ii v}c.
\end{align*}
\end{Rem}
\begin{Def}
A change of frame as described in  Remark~\ref{rem:change_of_frame} is called a \emph{pseudohermitian change of frame}.
\end{Def}

\begin{Rem}
On any Levi nondegenerate pseudohermitian CR manifold there is a naturally defined linear connection $\nabla^\theta$, called the Tanaka-Webster connection (see e.g.~\cite{DT}). In this paper we will not use it, however we notice that the functions $a$, $b$, and $c$ are related to $\nabla^\theta$ and to its torsion $T^\theta$ by the following equations:
\begin{gather*} \nabla^{\theta}_ZZ=-\ii\bar aZ,\quad\nabla^{\theta}_{\bar{Z}}Z=iaZ,\quad \nabla^{\theta}_TZ=-bZ,\quad T^{\theta}(T,\bar{Z})=cZ.
\end{gather*}
\end{Rem}

\begin{Rem}
A strongly pseudoconvex pseudohermitian CR manifold is in a natural way a contact metric manifold. In the three dimensional case
it is also true that every contact metric manifold is a strongly pseudoconvex CR manifold, 
since every almost CR structure is integrable. We recall that a contact metric manifold is Sasakian if and only if the contact metric structure is normal (see e.g.~\cite{DT}). This is equivalent to the vanishing of the pseudohermitian torsion $\tau=T^\theta(T,\cdot)$. With our notation, $M$ is a Sasakian manifold if and only if $c$ vanishes identically.
\end{Rem}

\subsection{CR isometric immersions}
Let $(M, T^{1,0}M, \theta)$ be a three-dimensional Levi nondegenerate pseudohermitian CR manifold and $f:M\rightarrow\mathbb{R}^m$ an isometric immersion.
Using the fact that $f$ is isometric if and only if all the conditions
\begin{equation*}
\begin{cases}
 \langle Zf,Zf\rangle=0,\\
\langle Tf,Zf\rangle=0,\\
\|Tf\|^2=\|Zf\|^2=1.
 \end{cases}
\end{equation*}
hold true for some pseudohermitian frame $(Z, \bar{Z}, T)$ (see \cite{AL}), easy computations give the following formulas which will be used several times in the sequel:
\begin{align}
\langle Zf,ZTf\rangle &= \bar c, & \langle Zf,\bar ZTf\rangle &= -\ii/2, & \langle Zf,TTf\rangle &=0, \nonumber\\
\langle Zf,Z\bar Zf\rangle &= -\ii\bar a, & \langle Zf,\bar Z\bar Zf\rangle &= -\ii a, & \langle Zf,T\bar Zf\rangle &=b-\ii/2, \label{variouseqs_CRiso}\\
\langle Tf,TZf\rangle &= 0, & \langle Tf, ZZf\rangle &= -\bar c, & \langle Tf,\bar Z Zf\rangle &=\ii/2.\nonumber
\end{align}
In the following we will identify $M$ to its isometric image $f(M)\subset\bR^n$, and denote the Euclidean standard connection on $\bR^n$  
by juxtaposition, 
the Levi-Civita connection on $M$ by $\nabla$, and the induced connection on the normal bundle $NM$ of $M$ by $D$.
The second fundamental form of the immersion is defined by:
\begin{align*}
A\colon TM\times TM&\to TM^\perp\subset\bR^n\\
  (U,V)&\mapsto A(U,V)=U Vf-(\nabla_U V)f,
\end{align*}
and the associated shape operator by:
\[
\langle A_\xi U,V\rangle=\langle\xi,A(U,V)\rangle \quad\text{for $U,V\in TM$ and $\xi\in NM$.}
\]
Let us recall the existence theorem for submanifolds. Let $M$ be an $m$-dimensional Riemannian manifold, $N$ a $k$-dimensional vector bundle over $M$ with a
connection $D$, and $A\colon TM\times TM\to N$ a symmetric bilinear map. Then there exists an isometric immersion $f\colon M \rightarrow \bR^{m+k}$
with normal bundle $N$ (up to a parallel vector bundle isometric isomorphism) and second fundamental form $A$ if and only if
the following equations of Gau\ss-Codazzi-Mainardi and Ricci are satisfied
\begin{align*}
& \langle R(U_1,U_2)U_3,U_4\rangle=\langle A(U_2,U_3),A(U_1,U_4)\rangle-\langle A(U_1,U_3),A(U_2,U_4)\rangle&&\textrm{ (Gau\ss)},\\
&(D_{U_1}A)(U_2,U_3)=(D_{U_2}A)(U_1,U_3)&&\textrm{ (Codazzi)},\\
& R^{\perp}(U_1,U_2)\xi=A(U_1,A_{\xi}U_2)-A(A_{\xi}U_1,U_2) &&\textrm{ (Ricci)}.
\end{align*}
with $R$ (resp. $R^{\perp}$) the Riemannian curvature on $M$ (resp. $N$) and $\xi\in\Gamma(N)$.

\section{Associated families}
We give now a formal definition of associated families for a three-dimensional pseudohermitian manifold.
\begin{Def}
Let $M$ be a three-dimensional pseudohermitian CR manifold, and  $(Z,\bar Z, T)$ a pseudohermitian local frame.  For all $\varphi\in\mathbb{R}$ let $r_\varphi\colon T^\bC M\to T^\bC M$ be the \emph{rotation} of angle $\varphi$ defined by:
\begin{align*}
r_{\varphi}Z=e^{\ii\varphi}Z,\quad r_{\varphi}\bar{Z}=e^{-\ii\varphi}\bar{Z},\quad r_{\varphi}T=T,
\end{align*}
A smooth one-parameter family $f_{\varphi}:M\rightarrow\mathbb{R}^{n}$ of isometric immersions
will be called an \emph{associated family} if the second fundamental forms $A^{f_{\varphi}}$ of $f_\varphi$ satisfy
\begin{equation}\label{eq_ass_fam}
\iota_{\varphi}(A^{f_\varphi}(U,V))=A^{f_0}(r_{\varphi}U,r_{\varphi}V),
\end{equation}
for all $U,V\in T^\bC M$, $\varphi\in\bR$ and for some bundle isomorphisms $\iota_{\varphi}:N^{\varphi}\rightarrow N^{0}$, where $N^{\varphi}$ is the normal bundle of $f_{\varphi}(M)$.

An isometric immersion $f\colon M\to\bR^n$ is said to \emph{admit an associated family} if there exists an associated family $f_\varphi$ such that $f_0=f$.
\end{Def}

\begin{Rem}
The definition \eqref{eq_ass_fam} is analogous to the classical definition of \emph{weak} associated families. One could try to use a different definition, more closely resembling classical \emph{strong} associated families, namely
\[
\iota_\varphi\circ df_{\varphi}(U)= df(r_{\varphi}U),\quad\text{for all $U\in TM$,}
\]
for some parallel isomorphism $\iota_\varphi\colon f^*_\varphi T\bR^n\to f^* T\bR^n$. However, from the condition $0=d(df_{\pi})=d(df\circ r_\pi)$ one easily obtains $Tf=0$, showing that there are no examples of strong associated families in this sense.
\end{Rem}
Using respectively Gau\ss, Codazzi and Ricci equations for the second fundamental forms $A^{f_\varphi}(\,\cdot\,,\,\cdot\,)$ in an associated family, we find the following
\begin{Prop}
Necessary and sufficient conditions for the existence of an associated family are given by the following equations
\begin{align}
0&=\langle A(T,Z),A(Z,\,\cdot\,)\rangle-\langle A(Z,Z),A(T,\,\cdot\,)\rangle \label{associated_gauss}\\
0&= (D A)(T,Z)=(D A)(Z,\bar{Z})=(D A)(T,\bar{Z})\label{associated_Codazzi}\\
0&=R^{\perp}(Z,T)\xi=A(A_{\xi}Z,T)-A(Z,A_{\xi}T)\label{associated_Ricci}.
\end{align}
\end{Prop}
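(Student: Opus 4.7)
The plan is to translate the Gauss-Codazzi-Ricci equations for each $f_\varphi$ into equations involving only data attached to $f_0$, and then to analyze them by Fourier decomposition in $\varphi$. Interpreting the bundle isomorphisms $\iota_\varphi$ as parallel bundle isometries (as is standard for weak associated families), we set
\[
\tilde A_\varphi(U,V) := \iota_\varphi\bigl(A^{f_\varphi}(U,V)\bigr) = A^{f_0}(r_\varphi U,\, r_\varphi V),
\]
so that Gauss-Codazzi-Ricci for $A^{f_\varphi}$ becomes equivalent to the same system for $\tilde A_\varphi$ with respect to the inner product and the normal connection $D^0$ on $N^0$. Since all $f_\varphi$ induce the same Webster metric on $M$, the Levi-Civita connection $\nabla$ and the Riemann tensor $R$ on $M$ are independent of $\varphi$.

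Fix a pseudohermitian local frame $(Z,\bar Z, T)$ and decompose $T^\bC M = T^{1,0}M \oplus T^{0,1}M \oplus \bC T$ into weight subspaces of weights $k\in\{1,-1,0\}$, on which $r_\varphi$ acts by $\ee^{\ii k\varphi}$. Extending all forms $\bC$-multilinearly and writing
\[
G(U_1,U_2,U_3,U_4) := \langle A^{f_0}(U_2, U_3), A^{f_0}(U_1, U_4)\rangle - \langle A^{f_0}(U_1, U_3), A^{f_0}(U_2, U_4)\rangle,
\]
one finds that on weight-pure arguments $G(r_\varphi U_1,\dots,r_\varphi U_4) = \ee^{\ii(k_1+k_2+k_3+k_4)\varphi}\, G(U_1,\dots,U_4)$. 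Requiring this to equal the $\varphi$-independent $\langle R(U_1,U_2)U_3,U_4\rangle$ for every $\varphi$ forces $G$ to vanish whenever the total weight is nonzero. After using the skew-symmetries in $(U_1,U_2)$ and in $(U_3,U_4)$, the only independent nontrivial instances reduce to $G(T,Z,Z,\bar Z)=G(T,Z,Z,T)=0$, which together constitute exactly \eqref{associated_gauss}.

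The Codazzi and Ricci equations are handled by the same Fourier method, with one technical subtlety: because $\nabla$ does \emph{not} preserve the weight decomposition of $T^\bC M$, each term $\tilde A_\varphi(\nabla_{U_1}U_2, U_3)$ arising in $(D^0_{U_1}\tilde A_\varphi)(U_2,U_3)$ must be expanded by first splitting $\nabla_{U_1}U_2$ into its weight components and then applying $r_\varphi$. The result is a trigonometric polynomial in $\varphi$ whose zero Fourier mode reproduces the Codazzi (respectively Ricci) equation for $A^{f_0}$---automatically satisfied---and whose nonzero modes, after antisymmetrization in $(U_1,U_2)$, collapse exactly to the listed identities \eqref{associated_Codazzi} and \eqref{associated_Ricci} together with their complex conjugates (which follow by reality).

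For the converse, assume \eqref{associated_gauss}--\eqref{associated_Ricci}. Define $\tilde A_\varphi(U,V) := A^{f_0}(r_\varphi U, r_\varphi V)$; since $r_\varphi$ restricts to a real linear transformation of $TM$, this is a genuine symmetric $N^0$-valued bilinear form on $TM$. The assumed vanishing conditions, combined with Gauss-Codazzi-Ricci for $A^{f_0}$, imply that the right-hand sides $\langle R\cdot,\cdot\rangle$, $0$, and $R^\perp(\cdot,\cdot)\xi$ of those equations are themselves $r_\varphi$-invariant on weight-mixed arguments. The Fourier argument then runs in reverse, showing that $\tilde A_\varphi$ satisfies Gauss-Codazzi-Ricci for every $\varphi$. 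The fundamental existence theorem for submanifolds recalled in the preceding section produces the desired smooth family $f_\varphi$ with normal bundle $N^0$, second fundamental form $\tilde A_\varphi$, and $\iota_\varphi = \Id_{N^0}$. The main technical obstacle is the Codazzi bookkeeping: tracking the Fourier modes generated by the failure of $\nabla$ to respect the weight decomposition of $T^\bC M$.
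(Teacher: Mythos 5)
Your proposal is correct and follows essentially the same route as the paper: substitute the associated-family relation into the Gau\ss{}--Codazzi--Ricci equations, observe that on weight-pure arguments the right-hand sides acquire a common phase $\ee^{\ii k\varphi}$ while the left-hand sides are $\varphi$-independent, and conclude that all components of nonzero total weight must vanish, with sufficiency supplied by the fundamental existence theorem for submanifolds. Your treatment is in fact more explicit than the paper's on the converse direction and on the correction terms coming from $\nabla$ not preserving the weight decomposition, which the paper only asserts to work ``similarly.''
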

\begin{proof}
 The proof is inspired by \cite{E}. We show in detail how to obtain equation \eqref{associated_gauss}.
We consider the Gau\ss~equation
\begin{align*}
\langle R(U_1,U_2)U_3,U_4\rangle=\langle A^{f_\varphi}(U_2,U_3),A^{f_\varphi}(U_1,U_4)\rangle-\langle A^{f_\varphi}(U_1,U_3),A^{f_\varphi}(U_2,U_4)\rangle.
\end{align*}
for all type of vectors ($Z,\bar{Z}$ or $T)$. In all possible cases the terms on the right hand side pick up a common factor
$e^{\ii k}$ for some $k\in\bR$. The right hand side vanishes by the anti-symmetry of the Riemannian curvature as soon as
$U_1=U_2$ or $U_3=U_4$ and in that case the Gau\ss~ equation is automatically satisfied. Otherwise we get (up to conjugation)
\begin{align*}
\langle R(Z,T)Z,\cdot\rangle&=\langle A(T,Z),A(Z,\cdot)\rangle-\langle A(Z,Z),A(T,\cdot)\rangle\\
&=e^{\ii\varphi}(\langle A(T,Z),A(Z,\cdot)\rangle-\langle A(Z,Z),A(T,\cdot)\rangle),
\end{align*}
which gives equation \eqref{associated_gauss}. The proof of equations \eqref{associated_Codazzi} (resp. \eqref{associated_Ricci}) is similar using the equations of Codazzi (resp. Ricci) and hence we omit them.
\end{proof}

We now want to compute these equations explicitly. For that purpose we first compute the second fundamental form.
\begin{Lem}\label{lemma_2ndFF}
The second fundamental form of an isometric immersion $f\colon M\to\bR^n$ is given by:
\begin{align*}
A(T,Z)&=TZf+(b-\frac{\ii}{2})Zf,&
A(Z,Z)&=ZZf-\ii\bar{a}Zf+\bar{c}Tf,\\
A(Z,\bar{Z})&=Z\bar{Z}f+\ii\bar{a}\bar{Z}f+\frac{\ii}{2}Tf,&
A(T,T)&=TTf.
\end{align*}
\end{Lem}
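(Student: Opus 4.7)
The definition $A(U,V)=UVf-(\nabla_UV)f$ extends $\bC$-bilinearly to complex vectors, so the lemma reduces to computing the four Levi--Civita covariant derivatives $\nabla_ZZ$, $\nabla_Z\bar Z$, $\nabla_TZ$, and $\nabla_TT$ with respect to the Webster metric, and plugging in.

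My plan is to apply the Koszul formula
\[
2\langle\nabla_UV,W\rangle=U\langle V,W\rangle+V\langle U,W\rangle-W\langle U,V\rangle+\langle[U,V],W\rangle-\langle[V,W],U\rangle-\langle[U,W],V\rangle
\]
with $U,V,W$ ranging over $\{Z,\bar Z,T\}$. The crucial simplification is that all the Webster inner products between frame vectors are \emph{constants}: from the remark after Notation~\ref{not:product} one has $\langle Z,Z\rangle=\langle\bar Z,\bar Z\rangle=\langle Z,T\rangle=\langle\bar Z,T\rangle=0$ and $\langle Z,\bar Z\rangle=\langle T,T\rangle=1$. Hence the three derivative terms in Koszul drop out identically and only the three bracket terms contribute. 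These are read off directly from \eqref{eq_for_a}--\eqref{eq_for_c_barb}, which give $[Z,\bar Z]=-\ii T-\ii aZ-\ii\bar a\bar Z$, $[Z,T]=bZ+\bar c\bar Z$, $[\bar Z,T]=cZ+\bar b\bar Z$.

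Testing each unknown $\nabla_UV$ against $Z$, $\bar Z$, $T$ (recalling that $\langle\,\cdot\,,\bar Z\rangle$ picks off the $Z$-component, etc.) yields
\begin{align*}
\nabla_ZZ&=\ii\bar aZ-\bar cT, & \nabla_Z\bar Z&=-\ii\bar a\bar Z-\tfrac{\ii}{2}T,\\
\nabla_TZ&=\bigl(-b+\tfrac{\ii}{2}\bigr)Z, & \nabla_TT&=0.
\end{align*}
Inserting these into $A(U,V)=UVf-(\nabla_UV)f$ gives the four claimed identities immediately.

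The calculation is entirely routine bookkeeping; there is no conceptual obstacle. The only mildly delicate point arises when computing $\nabla_TZ$: a raw application of Koszul produces the coefficient $(\bar b-b+\ii)/2$ in front of $Z$, which collapses to the asserted $-b+\tfrac{\ii}{2}$ precisely by virtue of the Jacobi identity $b+\bar b=0$ recorded in \eqref{eq_jacobi}. All other coefficients vanish by direct inspection (e.g.\ the $T$-component of $\nabla_TZ$ is killed because $\langle[T,Z],T\rangle=0$ and all relevant derivatives are derivatives of constants).
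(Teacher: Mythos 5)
Your proof is correct, and I have checked all four covariant derivatives against the bracket relations \eqref{eq_for_a}--\eqref{eq_for_c_barb}: they come out as you state, and substituting into $A(U,V)=UVf-(\nabla_UV)f$ gives exactly the four claimed formulas. Your route differs from the paper's in how the tangential part of $UVf$ is computed: the paper projects extrinsically, writing $A(X_1,X_2)=X_1X_2f-\langle X_1X_2f,Zf\rangle\bar Zf-\langle X_1X_2f,\bar Zf\rangle Zf-\langle X_1X_2f,Tf\rangle Tf$ and reading the coefficients off the precomputed table \eqref{variouseqs_CRiso}, whereas you compute $\nabla_UV$ intrinsically via the Koszul formula, using only the constancy of the frame inner products and the brackets. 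The two are equivalent since $(\nabla_UV)f$ is precisely the tangential projection of $UVf$, and both ultimately rest on the same input (the isometry conditions differentiated once, respectively encoded in \eqref{variouseqs_CRiso} or in the vanishing of the derivative terms in Koszul); a side benefit of your version is that it simultaneously produces the connection table that the paper only records later, in the remark following Proposition \ref{values_abc}, and lets one check consistency with it. One small inaccuracy in your commentary: the Jacobi identity $b+\bar b=0$ is needed not only for $\nabla_TZ$ but also for the $T$-component of $\nabla_Z\bar Z$, where the raw Koszul output is $\tfrac12(-\ii-b-\bar b)$ and collapses to $-\tfrac{\ii}{2}$ only by that identity; this does not affect the validity of the proof.
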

\begin{proof} As the second fundamental form $A$ is the normal part of the Euclidean connection, we have
\begin{align*}A(T,Z)=TZf-\langle TZf,Zf\rangle \bar{Z}f-\langle TZf,\bar{Z}f\rangle Zf-\langle TZf,Tf\rangle \bar{T}f.
\end{align*}
We get $\langle TZf,Zf\rangle =\langle TZf,Tf\rangle =0$ and $\langle TZf,\bar{Z}f\rangle =\frac{\ii}{2}-b$ using equations \eqref{variouseqs_CRiso}, which together give the first equation. The three other equations can be computed similarly.\end{proof}
\subsection{The Gau\ss{} equation}
\begin{Prop}
For an isometric immersion $f\colon M\to\bR^n$, the Gau\ss{} equation \eqref{associated_gauss} is equivalent to the following:
\begin{align}
T\bar{c}-\ii\bar{c}+2b\bar{c}&=0,\label{explicit_gauss1}\\
\bar{Z}\bar{c}-2\ii a\bar{c}&=0.\label{explicit_gauss2}\end{align}
\end{Prop}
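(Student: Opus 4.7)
The plan is to show that \eqref{associated_gauss} is equivalent to the two scalar identities \eqref{explicit_gauss1}--\eqref{explicit_gauss2} by evaluating the Gau\ss{} expression on the free fourth slot at each vector of a pseudohermitian frame $\{Z,\bar Z,T\}$ in turn. Since the left-hand side of \eqref{associated_gauss} is linear in this slot, this gives three scalar conditions. The case in which the free argument equals $Z$ is automatic: by the symmetry of the Euclidean inner product $\langle A(T,Z),A(Z,Z)\rangle$ and $\langle A(Z,Z),A(T,Z)\rangle$ are the same scalar, so their difference vanishes identically. Only the choices $\cdot=\bar Z$ and $\cdot=T$ therefore carry information.

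Next I substitute the explicit formulas for $A(T,Z)$, $A(Z,Z)$, $A(Z,\bar Z)$, $A(T,T)$ from Lemma~\ref{lemma_2ndFF}, together with $A(T,\bar Z)=\overline{A(T,Z)}$, into the two remaining Gau\ss{} identities. Each side expands into a linear combination---with coefficients polynomial in $a,b,c$ and their conjugates---of complex bilinear Euclidean pairings of second derivatives of $f$ such as $\langle TZf,TZf\rangle$, $\langle TZf,Z\bar Zf\rangle$, $\langle ZZf,T\bar Zf\rangle$, and $\langle ZZf,TTf\rangle$. I evaluate each such pairing by differentiating the basic isometry relations
\[
\langle Zf,Zf\rangle=0,\qquad \langle Zf,\bar Zf\rangle=1,\qquad \langle Zf,Tf\rangle=0,\qquad \|Tf\|^{2}=1,
\]
as well as the already-computed entries of \eqref{variouseqs_CRiso}, along $Z$, $\bar Z$ or $T$, and commuting derivatives by means of \eqref{eq_for_a}--\eqref{eq_for_c_barb}. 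In particular, differentiating $\bar c=\langle Zf,ZTf\rangle$ along $T$ produces the term $T\bar c$ that appears in \eqref{explicit_gauss1}, while differentiating it along $\bar Z$ produces the $\bar Z\bar c$ of \eqref{explicit_gauss2}; the purely algebraic correction terms $2b\bar c$ and $-2\ii a\bar c$ arise from the $(b-\ii/2)Zf$ and $-\ii\bar a\,Zf$ pieces of $A(T,Z)$ and $A(Z,Z)$, combined with the commutators needed to move $T$ past $Z$.

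After collecting terms, the case $\cdot=T$ reduces, modulo the Jacobi identity \eqref{eq_jacobi} and the relation $b+\bar b=0$, to $T\bar c-\ii\bar c+2b\bar c=0$, and the case $\cdot=\bar Z$ to $\bar Z\bar c-2\ii a\bar c=0$. The main obstacle is pure bookkeeping: the expansion produces a large number of mixed third-order pairings, and each must be systematically rewritten in terms of $a$, $b$, $c$ and their first derivatives before the cancellations responsible for the compact form of \eqref{explicit_gauss1}--\eqref{explicit_gauss2} become visible. Conversely, reading the chain of reductions backwards shows that the two scalar identities are already sufficient for \eqref{associated_gauss}, yielding the claimed equivalence.
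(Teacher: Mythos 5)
Your proposal is correct and follows essentially the same route as the paper: insert $T$ and $\bar Z$ into the free slot of \eqref{associated_gauss}, expand via Lemma~\ref{lemma_2ndFF}, and reduce the resulting third-order pairings using \eqref{variouseqs_CRiso} and the commutation relations \eqref{eq_for_a}--\eqref{eq_for_c_barb} to obtain \eqref{explicit_gauss1} and \eqref{explicit_gauss2} respectively. Your explicit remark that the slot $\cdot=Z$ yields no condition (by symmetry of the inner product) is left implicit in the paper but is the same observation.
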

\begin{proof} Inserting $T$ in equation \eqref{associated_gauss} we get, by use of Lemma \ref{lemma_2ndFF},
\begin{align*}
0=\langle A(T,Z),A(Z,T)\rangle-\langle A(Z,Z),A(T,T)\rangle=\langle TZf,TZf\rangle-\langle ZZf,TTf\rangle.
\end{align*}
Hence using equations \eqref{eq_for_a}, \eqref{eq_for_b_barc} and \eqref{eq_for_c_barb}, and finally \eqref{variouseqs_CRiso} we have
\begin{align*}
\langle TZf,TZf\rangle-\langle ZZf,TTf\rangle&=\langle TZf,TZf\rangle+\langle Zf,TZTf\rangle\\
&\qquad+\langle Zf, bZTf\rangle+\langle Zf,\bar{c}\bar{Z}Tf\rangle\\
&=\langle TZf,TZf\rangle-\langle TZf,ZTf\rangle +T\bar{c}+b\bar{c}-\frac{\ii}{2}\bar{c}\\
&=\langle TZf,TZf\rangle-\langle TZf,TZf\rangle-\langle TZf,bZf\rangle\\
&\qquad-\langle TZf,\bar{c}\bar{Z}f\rangle+T\bar{c}+b\bar{c}-\frac{\ii}{2}\bar{c}\\
&=-\ii\bar{c}+T\bar{c}+2b\bar{c},
\end{align*}
and the first equation holds. Equation \eqref{explicit_gauss2} follows in a similar way by inserting $\bar{Z}$
in \eqref{associated_gauss}.\end{proof}

We show now that in a suitable pseudohermitian frame the functions $a$, $b$ and $c$ have a very simple form.

\begin{Prop}\label{values_abc}
If an isometric immersion $f\colon M\to\bR^n$ satisfies \eqref{associated_gauss}, then there exists a pseudohermitian local frame $(Z,\bar Z,T)$ such that $a=0$, $b=\ii/2$, $c$ is constant and purely imaginary, $ac=0$, and $a$ satisfies $Ta=\frac\ii2a$.
\end{Prop}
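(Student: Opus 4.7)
The plan is to normalize the structure functions $a$, $b$, $c$ by successive pseudohermitian changes of frame (Remark~\ref{rem:change_of_frame}), using the Gauss constraints \eqref{explicit_gauss1}--\eqref{explicit_gauss2} together with the Jacobi identity \eqref{eq_jacobi}. The first stage is to arrange $b=\ii/2$. Since \eqref{eq_jacobi} forces $b+\bar b=0$, the combination $1/2+\ii b$ is real-valued, and the real ODE $Tv=1/2+\ii b$ admits a local solution (prescribe arbitrary smooth data on a transversal to the $T$-flow and integrate along $T$). Under the corresponding frame change, $b$ transforms into $b+\ii Tv=\ii/2$. In this normalized frame \eqref{explicit_gauss1} becomes $T\bar c=0$ (hence $Tc=0$), while \eqref{explicit_gauss2} still reads $\bar Z\bar c=2\ii a\bar c$, with conjugate $Zc=-2\ii\bar a c$.

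The central technical point is to prove that $|c|$ is locally constant. To this end I would differentiate \eqref{explicit_gauss2} along $T$: using $T\bar c=0$ and the commutator $[T,\bar Z]=-cZ+\tfrac{\ii}{2}\bar Z$ (deduced from \eqref{eq_for_c_barb} with $\bar b=-\ii/2$), one obtains
\[
2\ii(Ta)\bar c \;=\; T(\bar Z\bar c) \;=\; [T,\bar Z]\bar c \;=\; -c\,Z\bar c-a\bar c.
\]
On the other hand, Jacobi \eqref{eq_jacobi} with $b=\ii/2$ and $Zc=-2\ii\bar a c$ yields $Ta=\tfrac{\ii}{2}a-\bar a c$. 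Comparing these two expressions for $Ta$ forces the hidden identity $Z\bar c=2\ii\bar a\bar c$ wherever $c\neq 0$. Together with its conjugate and \eqref{explicit_gauss2} this gives
\[
d\bar c=2\ii\bar c(\bar a\zeta+a\bar\zeta),\qquad dc=-2\ii c(\bar a\zeta+a\bar\zeta),
\]
whence $d|c|^2=0$.

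With this in hand a case analysis closes the argument. If $c\equiv 0$ on the neighborhood, substituting $b=\ii/2$ and $c=0$ into \eqref{eq_jacobi} directly gives $Ta=\tfrac{\ii}{2}a$, and the remaining assertions ($ac=0$ and $c$ constant purely imaginary) hold trivially. Otherwise $c$ is nowhere zero and $|c|$ is a positive constant: writing $c=|c|e^{\ii\alpha}$, the relation $Tc=0$ gives $T\alpha=0$, and a further frame change with $v_0=\pi/4-\alpha/2$ satisfies $Tv_0=0$ (preserving $b=\ii/2$) and turns $c$ into the purely imaginary constant $\ii|c|$. A short computation—$\bar Zv_0=-\tfrac{1}{2}\bar Z\alpha=a$, using $\bar Z\log c=-2\ii a$ (equivalent to \eqref{explicit_gauss2})—then yields $a=0$ in this last frame.

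The main obstacle is the derivation of the identity $Z\bar c=2\ii\bar a\bar c$: it is this hidden consequence of \eqref{explicit_gauss2} and Jacobi that makes $|c|$ locally constant, without which the normalization of $c$ in the generic case would fail.
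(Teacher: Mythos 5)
Your proof is correct and follows essentially the same route as the paper: both combine \eqref{explicit_gauss1}, \eqref{explicit_gauss2} and the Jacobi identity \eqref{eq_jacobi} to show that $|c|^2$ has vanishing derivative and then conclude by pseudohermitian changes of frame. The only difference is organizational --- the paper first rotates $c$ to be purely imaginary and reads $b=\ii/2$ off \eqref{explicit_gauss1}, whereas you normalize $b$ first and rotate $c$ (and kill $a$) at the very end; your explicit identity $Z\bar c=2\ii\bar a\bar c$ is the same relation the paper obtains implicitly when substituting $a=-\ii\bar Z\bar c/(2\bar c)$ into $Ta-\frac\ii2 a+\bar a c=0$.
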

\begin{proof}
Because of Remark \ref{rem:change_of_frame}, we can choose a pseudohermitian frame such that $c$ is purely imaginary. Thus from the first equation we obtain that either $c=0$ or $c$ is $T$-constant and $b=\ii/2$.

If $c\neq0$ is $T$-constant and $b=\ii/2$, then by equations \eqref{eq_jacobi} and \eqref{explicit_gauss1} we have:
\begin{align*}
\ii Zc+Ta-\frac{\ii a}2-\bar{a}c&=0,
\end{align*}
and thus:
\[
Ta-\frac\ii2a+\bar ac=0.
\]
Replacing $a=-(\ii\bar{Z}\bar{c})/(2\bar{c})$, one gets $2\ii Z(c\bar c)=0$, i.e. $|c|$ is constant along $Z$, and then $|c|$ and $c$ are constant. It follows then that $a=0$.

If $c=0$ then both equations \eqref{explicit_gauss1}, \eqref{explicit_gauss2} are trivially satisfied. We can, after a suitable pseudohermitian change of frame, assume that $b=\ii/2$. From \eqref{eq_jacobi} then we obtain $Ta=\frac\ii2a$.
\end{proof}

\begin{Def}
We say that a pseudohermitian frame $(Z,\bar Z,T)$ is \emph{adapted} if it is of the type described in Proposition \ref{values_abc}.
\end{Def}

\begin{Rem}
The Levi-Civita connection in an adapted pseudohermitian frame $(Z,\bar{Z},T)$ satisfies
\begin{align*}
\nabla_Z Z&=\ii\bar a Z-\bar c T,& \nabla_Z T&=\bar c \bar Z +\frac\ii2 Z,& \nabla_T Z&=0,\\
 \nabla_TT&=0,&\nabla_{\bar Z}Z&=\ii a Z+\frac\ii2T.
\end{align*}
\end{Rem}

\subsection{Codazzi equations and parallelity of the mean curvature}
We turn now our attention to equation \eqref{associated_Codazzi}, under the assumption that \eqref{associated_gauss} holds true.
\begin{Prop}\label{prop:codazzi}
If $f\colon M\to\bR^n$ is an isometric immersion for which \eqref{associated_gauss} holds true, then in an adapted pseudohermitian frame $(Z,\bar Z,T)$ the Codazzi equations \eqref{associated_Codazzi} are equivalent to the following:
\begin{align*}
D_TA(Z,Z)&=0,\\
D_{\bar{Z}}A(Z,Z)&=2\ii aZZf+2|a|^2Zf+\ii TZf ,\\
D_ZA(Z,T)&=-\bar{c}TTf+\frac{\ii}{2}(ZZf+\bar{c}Tf-\ii\bar aZf)+\bar{c}(Z\bar{Z}f+\frac{\ii}{2}Tf)+\ii\bar aTZf,\\
D_{\bar{Z}}A(Z,T)&=\frac{\ii}{2}TTf+{c}(ZZf+\bar{c}Tf)-\frac{\ii}{2}(Z\bar{Z}f+\frac{\ii}{2}Tf-\ii\bar a\bar Zf)+\ii aTZf,\\
D_TA(Z,T)&=0,\\
D_ZA(T,T)&=\ii TZf+2\bar{c}T\bar{Z}f,\\
D_ZA(Z,\bar{Z})&=-\bar{c}T\bar{Z}f-\frac{\ii}{2}TZf,\\
D_TA(Z,\bar{Z})&=0.
\end{align*}
\end{Prop}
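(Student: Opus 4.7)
My plan is to expand each of the three conditions $(DA)(T,Z)=(DA)(Z,\bar Z)=(DA)(T,\bar Z)=0$ in the spirit of the proof of the preceding proposition, exploiting that the rotation $r_\varphi$ acts diagonally on $T^\bC M$ with weights $+1,-1,0$ on $Z,\bar Z,T$. Writing out the Codazzi equation for each $f_\varphi$ and substituting $A^{f_\varphi}(U,V)=\iota_\varphi^{-1}A(r_\varphi U,r_\varphi V)$, any choice of triple $(U_1,U_2,U_3)$ from the pseudohermitian frame produces a polynomial identity in $\ee^{\ii\varphi}$; the $\varphi$-independent component is the Codazzi identity for $f_0$ itself and carries no information, while the remaining coefficients of $\ee^{\ii k\varphi}$, $k\neq 0$, are the genuinely new scalar constraints that the existence of the associated family imposes.

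Concretely, I would run through the nine triples obtained by letting $(U_1,U_2)$ be $(T,Z)$, $(Z,\bar Z)$, $(T,\bar Z)$ in turn and $U_3$ vary over $\{Z,\bar Z,T\}$. For each triple I expand $(D_{U_1}A)(U_2,U_3)=D_{U_1}(A(U_2,U_3))-A(\nabla_{U_1}U_2,U_3)-A(U_2,\nabla_{U_1}U_3)$, insert the explicit expressions from Lemma~\ref{lemma_2ndFF} for $A(T,Z)$, $A(Z,Z)$, $A(Z,\bar Z)$, $A(T,T)$ in the adapted frame (so $b=\ii/2$, whence the tangent correction in $A(T,Z)$ drops), and substitute the Levi-Civita formulas $\nabla_ZZ=\ii\bar aZ-\bar cT$, $\nabla_ZT=\bar c\bar Z+\tfrac\ii2 Z$, $\nabla_{\bar Z}Z=\ii aZ+\tfrac\ii2 T$, $\nabla_TZ=\nabla_TT=0$, together with their complex conjugates. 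Collecting each power of $\ee^{\ii\varphi}$ separately and invoking the adapted-frame identities $ac=0$ and $Ta=\tfrac\ii2 a$ together with the already-assumed Gauss consequences $T\bar c=0$ and $\bar Z\bar c=2\ii a\bar c$, the surviving coefficients collapse exactly to the eight formulas displayed in the statement. The converse direction is then automatic: if the eight equations hold, then every $\ee^{\ii k\varphi}$-coefficient of the Codazzi identity for $f_\varphi$ vanishes (the $k=0$ piece being tautological Codazzi for $f_0$), hence $(DA)(T,Z)=(DA)(Z,\bar Z)=(DA)(T,\bar Z)=0$.

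The main obstacle is strictly computational bookkeeping. One must track carefully the tangent-versus-normal decomposition when unpacking terms such as $A(\nabla_{U_1}U_2,\cdot)$: for example $\nabla_ZT=\bar c\bar Z+\tfrac\ii2 Z$ has components in both the $Z$ and $\bar Z$ eigenspaces of $r_\varphi$, so a single such term scatters across two distinct $\ee^{\ii k\varphi}$-coefficients, and analogous scattering occurs for $\nabla_ZZ$ and $\nabla_{\bar Z}Z$. The remarkable cleanness of the right-hand sides — in particular the absence of any residual tangent component, visible already in the compact shape $D_{\bar Z}A(Z,Z)=2\ii a\,ZZf+2|a|^2 Zf+\ii TZf$ — rests essentially on the dichotomy $a=0$ or $c=0$ built into an adapted frame, which kills the otherwise-surviving contributions proportional to $\bar ac$ or $\bar a\bar c$; without this dichotomy the formulas for $D_ZA(Z,T)$ and $D_{\bar Z}A(Z,T)$ would fail to be purely normal as required.
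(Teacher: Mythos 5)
Your plan is correct and coincides with the paper's own proof: both expand $(D_{X_1}A)(X_2,X_3)=D_{X_1}(A(X_2,X_3))-A(\nabla_{X_1}X_2,X_3)-A(X_2,\nabla_{X_1}X_3)$ over the not-all-equal frame triples, substitute Lemma~\ref{lemma_2ndFF} together with the adapted-frame Christoffel data (the paper writes these as the coefficients $\langle X_iX_jf,\bar Zf\rangle$, $\langle X_iX_jf,Zf\rangle$, $\langle X_iX_jf,Tf\rangle$ evaluated via \eqref{variouseqs_CRiso}, while you quote the equivalent closed forms for $\nabla_ZZ$, $\nabla_ZT$, $\nabla_{\bar Z}Z$), and simplify using $b=\ii/2$, $ac=0$ and the constancy of $c$. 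Your opening paragraph on extracting $\ee^{\ii k\varphi}$-coefficients re-derives \eqref{associated_Codazzi} itself, which is the content of the earlier proposition rather than of this one, but this redundancy does not affect the argument.
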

\begin{proof}
For $X_1,X_2,X_3\in\{Z,\bar Z,T\}$ not all equal we have:
\begin{align*}
0=(D_{X_1}A)(X_2,X_3)=D_{X_1}A(X_2,X_3)-A(\nabla_{X_1}X_2,X_3)-A(X_2,\nabla_{X_1}X_3).
\end{align*}
With
\begin{align*}
\nabla_{X_i}X_j=\langle X_iX_jf,\bar{Z}f\rangle Z+\langle X_iX_jf,Zf\rangle\bar{Z}+\langle X_iX_jf,Tf\rangle T,
\end{align*}
we get
\begin{align*}
D_{X_1}A(X_2,X_3)&=\langle X_1X_2f,\bar{Z}f\rangle A(Z,X_3)+\langle X_1X_2f,Zf\rangle A(\bar{Z},X_3)\\
&\qquad+\langle X_1X_2f,Tf\rangle A(T,X_3)+\langle X_1X_3f,\bar{Z}f\rangle A(X_2,Z)\\
&\qquad+\langle X_1X_3f,Zf\rangle A(X_2,\bar{Z})+\langle X_1X_3f,Tf\rangle A(X_2,T).
\end{align*}
We give an idea of the proof by computing an example.
Let $X_2=Z$, $X_3=Z$.
Using \eqref{eq_for_a}, \eqref{eq_for_b_barc}, \eqref{eq_for_c_barb} and  Lemma \ref{lemma_2ndFF} we get, for $X_1=T$
\begin{align*}
D_TA(Z,Z)&=2\langle TZf,\bar{Z}f\rangle A(Z,Z)+2\langle TZf,Tf\rangle A(Z,T)=0,
\end{align*}
and for $X_1=\bar{Z}$,
\begin{align*}
D_{\bar{Z}}A(Z,Z)&=2\langle \bar{Z}Zf,\bar{Z}f\rangle A(Z,Z)+2\langle \bar{Z}Zf,Tf\rangle A(Z,T)\\
&=2\ii aZZf+2|a|^2Zf+\ii TZf,
\end{align*}
which gives the first two equations by use of Lemma \ref{values_abc}. The six other cases can be computed in a similar way.
\end{proof}
\begin{Prop} \label{rem_meancurv}If an isometric  immersion $f\colon M\to\bR^n$ admits an associated family, then the mean curvature vector of $f(M)$ is parallel in the normal bundle of $f(M)$.
\end{Prop}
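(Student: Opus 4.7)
The approach is to verify $D_UH=0$ for each $U\in\{Z,\bar Z,T\}$, starting from the identity $3H=2A(Z,\bar Z)+A(T,T)$, which is valid in an adapted pseudohermitian frame of Proposition~\ref{values_abc} since $X=(Z+\bar Z)/\sqrt 2$, $Y=\ii(Z-\bar Z)/\sqrt 2$, $T$ is orthonormal. For $U=Z$ (and by conjugation $U=\bar Z$), the formulas
\[
D_Z A(Z,\bar Z)=-\bar c\,T\bar Zf-\tfrac{\ii}{2}TZf,\qquad D_Z A(T,T)=\ii\,TZf+2\bar c\,T\bar Zf
\]
from Proposition~\ref{prop:codazzi} combine to give $2D_Z A(Z,\bar Z)+D_Z A(T,T)=0$ identically, so $D_Z H=D_{\bar Z}H=0$.

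For $U=T$, Proposition~\ref{prop:codazzi} gives $D_T A(Z,\bar Z)=0$, reducing the claim to $D_T A(T,T)=0$. My plan is to apply the normal-bundle commutator identity
\[
[D_Z,D_{\bar Z}]\xi-D_{[Z,\bar Z]}\xi=R^\perp(Z,\bar Z)\xi
\]
to $\xi=A(T,T)$. Since $[Z,\bar Z]=-\ii T-\ii aZ-\ii\bar a\bar Z$ by \eqref{eq_for_a}, the bracket term isolates $-\ii D_T A(T,T)$ up to $HM$-derivatives of $A(T,T)$; by Proposition~\ref{prop:codazzi} these read $D_ZA(T,T)=\ii A(T,Z)+2\bar c\,A(T,\bar Z)$ and its conjugate, both of which vanish once $A(T,Z)=A(T,\bar Z)=0$ is established. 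In that situation the shape operator $A_{A(T,T)}$ preserves $HM$, so the Ricci equation $R^\perp(Z,\bar Z)\xi=A(Z,A_\xi\bar Z)-A(A_\xi Z,\bar Z)$ forces $R^\perp(Z,\bar Z)A(T,T)=0$, and the commutator identity then closes to give $D_T A(T,T)=0$.

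Hence the content reduces to the vanishing of $A(T,Z)$ and $A(T,\bar Z)$. The plan here is to apply the analogous commutator identity in the direction $(Z,T)$ to $\xi=A(Z,\bar Z)$: using that $D_T A(Z,\bar Z)=0$ and that the $HM$-derivatives of $A(Z,\bar Z)$ are explicit linear combinations of $A(T,Z)$ and $A(T,\bar Z)$ by Proposition~\ref{prop:codazzi}, the associated-family Ricci condition \eqref{associated_Ricci} $R^\perp(Z,T)A(Z,\bar Z)=0$ unpacks into the algebraic relation $(|c|^2-\tfrac14)A(T,Z)=0$ and its conjugate. The main obstacle is the degenerate locus $|c|^2=\tfrac14$, where this identity is vacuous: there one must combine the Codazzi-derived expressions for $D_ZA(T,Z)$ and $D_{\bar Z}A(T,Z)$ (which in the adapted frame take the form $-\bar c A(T,T)+\tfrac{\ii}{2}A(Z,Z)+\bar c A(Z,\bar Z)$ and $\tfrac{\ii}{2}A(T,T)+cA(Z,Z)-\tfrac{\ii}{2}A(Z,\bar Z)$ respectively) with the associated-family Gauss equations \eqref{explicit_gauss1}--\eqref{explicit_gauss2} and the dichotomy of Proposition~\ref{values_abc} to conclude.
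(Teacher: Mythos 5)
Your treatment of the $Z$ and $\bar Z$ directions is exactly the paper's proof: the two formulas of Proposition~\ref{prop:codazzi} give $2D_ZA(Z,\bar Z)+D_ZA(T,T)=0$ identically, and the paper in fact stops there, silently deferring the $T$-direction to the later, stronger statement $DA=0$. You are right that $D_TH=0$ still requires $(D_TA)(T,T)=0$, and your reduction in the nondegenerate case is sound: $R^\perp(Z,T)A(Z,\bar Z)=0$ together with $D_TA(Z,\bar Z)=0$ and the Codazzi expressions for $D_ZA(Z,\bar Z)$, $D_{\bar Z}A(Z,\bar Z)$ does unpack to $(|c|^2-\tfrac14)A(T,Z)=0$; and once $A(T,Z)=A(T,\bar Z)=0$, the Gauss condition \eqref{associated_gauss} evaluated on $T$ gives $\langle A(Z,Z),A(T,T)\rangle=0$, so that $R^\perp(Z,\bar Z)A(T,T)=\langle A(T,T),A(\bar Z,\bar Z)\rangle A(Z,Z)-\langle A(T,T),A(Z,Z)\rangle A(\bar Z,\bar Z)=0$ and your commutator identity closes. (Note that ``$A_{A(T,T)}$ preserves $HM$'' is not by itself the reason $R^\perp(Z,\bar Z)A(T,T)$ vanishes; you also need this orthogonality from Gauss.)

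The genuine gap is the degenerate case $c=\pm\tfrac\ii2$, which is certainly attained (it is the cylinder case of Theorem~\ref{main_theorem}). There your route insists on proving $A(T,Z)=0$, but the ingredients you list cannot deliver it: with $c=\tfrac\ii2$ and $a=0$ the two Codazzi expressions $D_ZA(Z,T)$ and $D_{\bar Z}A(Z,T)$ both reduce to $\tfrac\ii2\bigl(A(T,T)+A(Z,Z)-A(Z,\bar Z)\bigr)$ and carry no information about $A(T,Z)$ itself, while \eqref{explicit_gauss1}--\eqref{explicit_gauss2} are identities in $a,b,c$ already exhausted by Proposition~\ref{values_abc}. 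Indeed, the paper never establishes $A(T,Z)=0$ at this stage when $c=\tfrac\ii2$; that only follows from the final classification. The repair is to bypass $A(T,Z)$ entirely: when $c=\tfrac\ii2$ one has $[Z,T]=\tfrac\ii2(Z-\bar Z)$ and $D_ZA(Z,T)=D_{\bar Z}A(Z,T)$, hence $D_{[Z,T]}A(Z,T)=0$, and then $0=[D_Z,D_T]A(Z,T)=-D_T\bigl(\tfrac\ii2(A(T,T)+A(Z,Z)-A(Z,\bar Z))\bigr)=-\tfrac\ii2 D_TA(T,T)$ using $D_TA(Z,Z)=D_TA(Z,\bar Z)=0$. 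This is precisely the computation the paper carries out later, in the case $c=\tfrac\ii2$ of the proof that the second fundamental form is parallel; your argument needs it here.
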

\begin{proof}
The mean curvature vector is $H=2A(Z,\bar Z)+A(T,T)$. From
\[
D_ZA(T,T)=-2D_ZA(Z,\bar{Z}),
\]
we have that $H$ is parallel.
\end{proof}

\subsection{Ricci equations and parallelity of the second fundamental form}
In this section we prove that if $f\colon M\to\bR^n$ belongs to an associated family, then the second fundamental form is parallel. It follows then that $f(M)$ is locally extrinsic symmetric in $\bR^n$.
\begin{Prop}
If an isometric immersion $f\colon M\to \bR^n$ admits an associated family,
then the second fundamental form of the immersion is parallel, that is $DA=0$.
\end{Prop}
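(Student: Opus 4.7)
The plan is first to reduce $DA=0$ to the single identity $(D_ZA)(Z,Z)=0$ using the tensorial symmetries of $DA$ together with the associated Codazzi and $H$-parallelism, and then to extract this identity from the associated Ricci equation via commutator identities on normal sections.

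For the reduction, note that $(D_XA)(Y,W)$ is symmetric in $(Y,W)$ since $A$ is, and symmetric in $(X,Y)$ by the general Codazzi equation; hence it is totally symmetric in its three arguments. The associated Codazzi hypothesis \eqref{associated_Codazzi} forces this tensor to vanish whenever any two of its arguments form one of the pairs $\{T,Z\}$, $\{Z,\bar Z\}$, $\{T,\bar Z\}$. The only triples in $\{Z,\bar Z,T\}^3$ not covered are $(Z,Z,Z)$, $(\bar Z,\bar Z,\bar Z)$ and $(T,T,T)$; the last vanishes from the parallelism of $H$ (Proposition~\ref{rem_meancurv}) combined with $(D_TA)(Z,\bar Z)=0$, and the middle is the complex conjugate of the first. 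So it suffices to show $(D_ZA)(Z,Z)=0$.

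Expanding via $\nabla_ZZ=\ii\bar aZ-\bar cT$ in the adapted frame gives
\[
(D_ZA)(Z,Z)=D_ZA(Z,Z)-2\ii\bar a\,A(Z,Z)+2\bar c\,A(T,Z),
\]
so I need to control $D_ZA(Z,Z)$. The tool is the associated Ricci equation~\eqref{associated_Ricci}, equivalent to the vanishing of the normal curvature $R^\perp(T,Z)=0$; on sections of the normal bundle this reads $[D_T,D_Z]\xi=D_{[T,Z]}\xi$. I would apply this identity with $\xi$ each of $A(T,Z)$, $A(Z,\bar Z)$ and $A(Z,Z)$, computing both sides from the Codazzi formulas of Proposition~\ref{prop:codazzi}, the vanishing $D_TA(Y,W)=0$ for all $Y,W$ (from the reduction), the constancy of $c$ in the adapted frame, and the relation $Ta=\tfrac{\ii}{2}a$ (Proposition~\ref{values_abc}).

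Writing $S=A(Z,Z)$ and $P=A(T,Z)$, the commutator identity on $\xi=A(T,Z)$ yields $(\tfrac14-|c|^2)\,S=\ii a\bar c\,P$, and on $\xi=A(Z,\bar Z)$ yields $(|c|^2-\tfrac14)\,P=0$. By the dichotomy of Proposition~\ref{values_abc} (either $c=0$, or $c$ is a nonzero purely imaginary constant and $a=0$), in every subcase with $|c|^2\neq\tfrac14$ these identities force $S=P=0$, and then $(D_ZA)(Z,Z)=0$ is immediate from the expansion above. In the exceptional subcase $c=\pm\tfrac{\ii}{2}$ (so $a=0$, $c^2=-\tfrac14$) both identities become vacuous; here I would apply the alternative commutator $[D_T,D_{\bar Z}]=D_{[T,\bar Z]}$ to $\xi=A(Z,Z)$, using $D_{\bar Z}A(Z,Z)=\ii A(T,Z)$ and $D_TA(Z,Z)=0$, to obtain $D_ZA(Z,Z)=-A(T,Z)/(2c)$. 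Substituting into the expansion and using $4c^2+1=0$ gives $(D_ZA)(Z,Z)=A(T,Z)\bigl(2\bar c-1/(2c)\bigr)=-A(T,Z)(1+4c^2)/(2c)=0$. The principal obstacle is precisely this exceptional case, where the generic Ricci-compatibility identities are vacuous and one must invoke a separate commutator (involving $\bar Z$ rather than $Z$) to produce a relation of exactly the right form, the cancellation occurring because $4c^2+1=0$.
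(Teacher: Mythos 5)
Your reduction to the three ``diagonal'' components via the total symmetry of $DA$ is exactly the paper's first step, and your commutator identities coming from $R^\perp(T,Z)=0$ are correct and in places cleaner than the paper's (the identity $(|c|^2-\tfrac14)A(T,Z)=0$ obtained from $\xi=A(Z,\bar Z)$, and the exceptional-case cancellation $(D_ZA)(Z,Z)=-A(T,Z)(1+4c^2)/(2c)=0$, both check out). The problem is the component $(D_TA)(T,T)$. You dispose of it by citing the parallelism of $H$ (Proposition~\ref{rem_meancurv}), but that proposition's proof only establishes $D_ZH=0$ (equivalently $D_{\bar Z}H=0$), via $D_ZA(T,T)=-2D_ZA(Z,\bar Z)$; it says nothing about the $T$-direction. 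Since $\nabla_TZ=\nabla_TT=0$ in the adapted frame and $(D_TA)(Z,\bar Z)=0$, one has $D_TH=(D_TA)(T,T)$ exactly: the $T$-parallelism of $H$ \emph{is} the statement you are trying to prove, so this step is circular. The paper treats $(D_TA)(T,T)=0$ as the hard part, deriving it from the flatness identity $R^\perp(Z,\bar Z)$ applied to $A(T,T)$, i.e.
\begin{equation*}
(D_TA)(T,T)=\ii D_{[Z,\bar Z]}A(T,T)=-\ii R^\perp(Z,\bar Z)A(T,T)
=-\ii A(A_{TTf}\bar Z,Z)+\ii A(A_{TTf}Z,\bar Z),
\end{equation*}
and then killing the right-hand side by a case analysis on $c$ (in the case $c=\tfrac{\ii}{2}$ this requires yet another commutator argument starting from $D_ZA(Z,T)=D_{\bar Z}A(Z,T)$). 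Nothing in your proposal replaces this.

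The gap also propagates backwards into your case analysis: redoing your commutator on $\xi=A(T,Z)$ without assuming $(D_TA)(T,T)=0$ yields $(\tfrac14-|c|^2)A(Z,Z)=\ii a\bar c\,A(T,Z)-\bar c\,(D_TA)(T,T)$, so for $c\neq0$, $|c|\neq\tfrac12$ you cannot conclude $A(Z,Z)=0$ without first controlling $(D_TA)(T,T)$. To repair the argument you need an independent proof that $(D_TA)(T,T)=0$ --- for instance the paper's $R^\perp(Z,\bar Z)A(T,T)$ computation --- after which both your reduction and your identities go through.
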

\begin{proof}
We fix an adapted pseudohermitian frame $(Z,\bar Z, T)$.
Note that by the symmetry of the Codazzi equations \eqref{associated_Codazzi}, $(D_UA)(V,W)=0$ whenever $U,V,W\subset\{Z,\bar Z, T\}$ are not all equal.
From equation \eqref{associated_Ricci} we have
\[
D_{[\bar Z, T]}(A(Z,Z))=[D_{\bar Z},D_T](A(Z,Z)).
\]
Moreover:
\begin{align*}
D_{[\bar Z, T]}(A(Z,Z))&=-bD_{\bar Z}(A(Z,Z))+cD_{Z}(A(Z,Z))\\
  &=\big(\frac12-2|c|^2\big)A(Z,T)+c(D_ZA)(Z,Z)+a A(Z,Z),
  \end{align*}
  and
  \begin{align*}
[D_{\bar Z},D_T](A(Z,Z))&=D_{\bar Z}(D_T(A(Z,Z)))-D_T(D_{\bar Z}(A(Z,Z)))\\
  &=2D_{\bar Z}(A(\nabla_TZ,Z))-2D_T(A(\nabla_{\bar Z}Z,Z))\\
  &=-2A(\nabla_T\nabla_{\bar Z}Z,Z)=-2\ii Ta A(Z,Z)-\ii A(\nabla_TT,Z)\\
  &=a A(Z,Z),
\end{align*}
so that:
\begin{align}\label{computation_(D_ZA(ZZ))}
c(D_ZA)(Z,Z)&=2(|c|^2-\frac14)A(Z,T).
\end{align}

We consider now the term $(D_TA)(T,T)$. We have:
\begin{align*}
(D_TA)(T,T)&=D_TA(T,T)=\ii D_{[Z,\bar Z]}A(T,T)\\
  &=\ii[D_Z,D_{\bar Z}]A(T,T)-\ii R^\perp(Z,\bar Z)A(T,T)\\
  &=2\ii A(\nabla_Z\nabla_{\bar Z}T,T)-2\ii A(\nabla_{\bar Z}\nabla_{Z}T,T)-\ii R^\perp(Z,\bar Z)A(T,T)\\
  &=-\ii R^\perp(Z,\bar Z)A(T,T)\\
  &=-\ii A(A_{TTf}\bar{Z},Z)+\ii A(A_{TTf}Z,\bar{Z})\\
&=-\ii\langle \bar{Z}Zf,TTf\rangle A(\bar{Z},Z)-\ii\langle \bar{Z}\bar{Z}f,TTf\rangle A(Z,Z)\\
&\qquad-\ii\langle \bar{Z}Tf,TTf\rangle A(T,Z)+\ii\langle Z\bar{Z}f,TTf\rangle A(Z,\bar{Z})\\
&\qquad+\ii\langle ZZf,TTf\rangle A(\bar{Z},\bar{Z})+\ii\langle ZTf,TTf\rangle A(T,\bar{Z}).
\end{align*}

We now consider separately the three cases $c=0$, $c=\frac\ii2$ and $c\neq0,\frac\ii2$.

\paragraph{Case $c=0$.} 
We have $A(Z,T)=TZ=0$. Replacing in $D_ZA(Z,T)$, we get $ZZ=\ii\bar aZ$ which implies $A(Z,Z)=0$ and $(D_ZA)(Z,Z)=0$.
Finally,
\[
(D_TA)(T,T)=\ii\langle [\bar{Z},Z]f,TTf\rangle A(\bar{Z},Z)=0,
\]
and thus $DA=0$.

\paragraph{Case $c=\frac\ii2$.}
From \eqref{computation_(D_ZA(ZZ))} it follows that $(D_ZA)(Z,Z)=0$.
From $D_ZA(Z,T)=D_{\bar Z}A(Z,T)$ we obtain
\begin{align*}
0&=\frac\ii2D_{Z-\bar Z}A(Z,T)=D_{[Z,T]}A(Z,T)=[D_Z,D_T]A(Z,T)\\
  &=-D_TD_ZA(Z,T)=-D_T\big(\frac\ii2 A(T,T)+\frac\ii2A(Z,Z)-\frac\ii2A(Z,\bar Z)\big)\\
  &=-\frac\ii2D_TA(T,T)=-\frac\ii2(D_TA)(T,T),
\end{align*}
and hence $DA=0$.
\paragraph{Case $c\neq0,\frac\ii2$.}
If $c\neq 0$, from $D_TA(Z,T)=0$ and equation \eqref{computation_(D_ZA(ZZ))}, we have:
\begin{align*}
0&=D_T\big((D_ZA)(Z,Z)\big)\\
&=D_T\big(D_ZA(Z,Z)-2A(\nabla_ZZ,Z)\big)=D_TD_ZA(Z,Z)+2\bar{c}D_TA(Z,T)\\
&=D_ZD_TA(Z,Z)+D_{[T,Z]}A(Z,Z)=-\frac\ii2D_ZA(Z,Z)-\bar{c}D_{\bar{Z}}A(Z,Z)\\
&=-\frac\ii2(D_ZA)(Z,Z)-2bA(\nabla_ZZ,Z)-\bar{c}(D_{\bar{Z}}A)(Z,Z)-2\bar{c}A(\nabla_{\bar{Z}}Z,Z)\\
&=-\frac{\ii}{2}(D_ZA)(Z,Z)+\ii\bar{c}A(Z,T)-\ii\bar{c}A(Z,T).
\end{align*}
Hence $(D_ZA)(Z,Z)=0$ and, since
$c\neq\frac{\ii}{2}$, we have again $TZf=0=A(Z,T)$, and finally:
\[
(ZZf,TTf)=-(TZZf,Tf)=b(ZZf,Tf)+\bar{c}(\bar{Z}Zf,Tf)=0,
\]
i.e. $(D_TA)(T,T)=0$ and thus $DA=0$.
\end{proof}

\subsection{The classification}
 In this section we prove the main statement of this paper:
\begin{Th}\label{main_theorem}
Let $M$ be a three-dimensional strongly pseudo-convex pseudohermitian CR-manifold, and $f: M\rightarrow\mathbb{R}^n$ an isometric immersion admitting an associated family. Then one of the two following cases occurs:
\begin{itemize}
\item There exists an  isometric  local CR-diffeomorphism $\tilde f$ from $M$ to the standard sphere $S^3\subset\bC^2$ of radius $\sqrt2$ and an affine isometric embedding $\psi\colon\bC^2\to\bR^n$ such that $f=\psi\circ\tilde f$.
\item There exists an  isometric  local CR-diffeomorphism $\tilde f$ from $M$ to the cylinder $S^1+\ii\bR^2\subset\bR^2\oplus\ii\bR^2\simeq\bC^2$ of radius $1$ and $f(M)$ is an open domain in a three-dimensional affine subspace of $\bR^n$.
\end{itemize}
\end{Th}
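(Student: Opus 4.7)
The plan is to exploit the structural information gathered so far: in an adapted pseudohermitian frame one has $b=\ii/2$, $c$ a purely imaginary constant, $ac=0$, and $DA=0$ (parallelity of the second fundamental form). The proof is then a case analysis on the constant $c$, with the goal of showing that only $c=0$ (producing $S^3$) and $c=\ii/2$ (producing the cylinder) are compatible with the existence of an associated family.

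In the Sasakian case $c=0$ I would proceed as follows. From the parallelity argument one already has $A(Z,Z)=0$ and $A(Z,T)=0$, while $a$ is a priori free. Substituting these identities into the Codazzi expression for $(D_{\bar Z}A)(Z,T)$ displayed in Proposition~\ref{prop:codazzi} yields, after simplification, a relation of the form $A(T,T)=A(Z,\bar Z)-2\ii\bar a\,\bar Zf$; the left-hand side and the first normal term on the right live in the normal bundle, so projecting onto the tangent bundle forces $a=0$. The remaining bracket relations then coincide with those of $\mathfrak{su}(2)$, and $M$ is locally isomorphic as a pseudohermitian CR manifold to $SU(2)\cong S^3$ with its standard Webster metric. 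The only nonzero component of the second fundamental form is the parallel normal vector $\nu:=A(Z,\bar Z)=A(T,T)$, so $f(M)$ is totally umbilical; computing $|\nu|$ from the normalisations $\|Zf\|^2=\|Tf\|^2=1$ together with $DA=0$ identifies $f(M)$ as an open subset of a sphere of radius $\sqrt{2}$ in a four-dimensional affine subspace of $\bR^n$.

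In the case $c\neq 0$, Proposition~\ref{values_abc} gives $a=0$. The parallelity calculation produced the identity $c\,(D_ZA)(Z,Z)=2(|c|^2-\tfrac14)A(Z,T)$; combined with $DA=0$ this forces $A(Z,T)=0$ unless $|c|=\tfrac12$. To rule out the sub-case $|c|\neq\tfrac12$, I would apply the Ricci equation~\eqref{associated_Ricci} with $\xi=A(T,T)$ and use the supplementary identities $A(Z,T)=0$ and $\langle A(Z,Z),A(T,T)\rangle=0$ obtained in the parallelity proof, together with the explicit form of $A$ from Lemma~\ref{lemma_2ndFF}, to derive an algebraic incompatibility with the constancy of $c$; after a pseudohermitian change of frame this pins down $c=\ii/2$. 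The brackets in the real orthonormal frame $(X,Y,T)$ then read $[X,Y]=-T$, $[X,T]=Y$, $[Y,T]=0$, i.e.\ the Lie algebra $\mathfrak e(2)$ of the planar Euclidean group. This identifies $M$ locally as a pseudohermitian CR manifold with the cylinder $S^1+\ii\bR^2\subset\bC^2$; the remaining Codazzi and Ricci equations force $A\equiv 0$, so $f(M)$ is totally geodesic and hence contained in a three-dimensional affine subspace of $\bR^n$.

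The main obstacle is the elimination of the spurious sub-case $c\neq 0,\,\ii/2$: here none of the constraints vanish a priori, and the argument must combine the Codazzi and Ricci equations with the algebraic data of $A$ to produce the required contradiction. Once the value of $c$ has been pinned down, the identification of $M$ with $S^3$ or with the cylinder follows from the rigidity of the three-dimensional Lie algebra determined by the adapted frame, and the ambient geometry of $f(M)$ is then recovered from the standard uniqueness theorem for isometric immersions with parallel second fundamental form.
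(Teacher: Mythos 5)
Your overall architecture (a case analysis on the constant $c$, ending with the sphere and the cylinder) matches the paper's, but three of your key steps do not hold up.

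First, in the case $c=0$ your derivation of $a=0$ is incorrect. The covariant derivative $D_{\bar Z}(A(Z,T))=A(\nabla_{\bar Z}Z,T)+A(Z,\nabla_{\bar Z}T)=\ii a\,A(Z,T)+\frac{\ii}{2}A(T,T)+c\,A(Z,Z)-\frac{\ii}{2}A(Z,\bar Z)$ is a purely normal identity; the tangential term $-2\ii\bar a\,\bar Zf$ you extract comes from a sign slip in the expansion of $A(Z,\bar Z)=Z\bar Zf+\ii\bar a\bar Zf+\frac\ii2 Tf$ (the displayed formula in Proposition~\ref{prop:codazzi} does carry a misprinted sign on exactly that term). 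What one actually obtains is $A(T,T)=A(Z,\bar Z)$ and no information on $a$. Moreover $a$ genuinely cannot vanish here: with $a=c=0$ the intrinsic formula \eqref{eq_sect_curv} gives $K(Z,\bar Z)=-\frac14$, while the Gauss equation with $A(Z,Z)=A(Z,T)=0$ gives $K(Z,\bar Z)=\|A(Z,\bar Z)\|^2\geq0$, a contradiction; indeed the standard CR sphere has $a\neq0$ in any adapted frame. Consequently your identification of $M$ with $SU(2)$ via constant structure constants collapses, and you are still missing an argument identifying the CR structure (not merely the metric): the paper obtains the local isometry to $S^3$ from the Backes--Reckziegel classification of extrinsic symmetric submanifolds (ruling out $S^2\times\bR$ by curvature) and then invokes Blair--Vanhecke's uniqueness of the Sasakian structure on the round three-sphere.

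Second, the two remaining pillars are asserted rather than proved. (i) The elimination of $c\neq0$, $|c|\neq\frac12$ --- which you yourself flag as the main obstacle --- is left as a plan (``derive an algebraic incompatibility''); the paper's actual mechanism is that $DA=0$ makes $f(M)$ locally extrinsic symmetric, hence of nonnegative sectional curvature, and \eqref{eq_sect_curv} then forces $|c|\leq\frac12$ from $K(Z,T)$ and $|c|\geq\frac12$ from $K(Z,\bar Z)$ (using $a=0$ when $c\neq0$). (ii) In the flat case $c=\frac\ii2$ it is not true that the remaining Codazzi and Ricci equations force $A\equiv0$: products of circles and lines such as $S^1\times S^1\times S^1\subset\bR^6$ or $\bR^2\times S^1\subset\bR^4$ have parallel second fundamental form, flat normal bundle and parallel mean curvature, and satisfy Gauss--Codazzi--Ricci. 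Excluding them requires the defining property \eqref{eq_ass_fam} of the associated family itself --- the equivariance of $A$ under the rotations $r_\varphi$ --- combined with a rigidity lemma for the isometries preserving such products, and, in the residual case $\bR^2\times S^1$, the non-integrability of $HM$ played against the integrability of $\ker A_\xi$. Without these ingredients the proof does not close.
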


\begin{Cor}\label{main_corollary}
Let $M$ be a complete three-dimensional strongly pseudo-convex pseudohermitian CR-manifold, and $f: M\rightarrow\mathbb{R}^n$ an isometric immersion admitting an associated family. Then one of the two following cases occurs:
\begin{itemize}
\item There exists an  isometric  CR-diffeomorphism $\tilde f$ from $M$ onto the standard sphere $S^3\subset\bC^2$ of radius $\sqrt2$ and an affine isometric embedding $\psi\colon\bC^2\to\bR^n$ such that $f=\psi\circ\tilde f$.
\item There exists an  isometric covering map and  local CR-diffeomorphism $\tilde f$ from $M$ to the cylinder $S^1+\ii\bR^2\subset\bR^2\oplus\ii\bR^2\simeq\bC^2$ of radius $1$ and $f(M)$ is a three-dimensional affine subspace of $\bR^n$.\qed
\end{itemize}
\end{Cor}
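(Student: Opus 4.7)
The strategy is to exploit the parallel second fundamental form $DA=0$ (established in the preceding propositions) together with the adapted-frame normalisations $b=\ii/2$, $ac=0$ and $c\in\ii\bR$ constant from Proposition \ref{values_abc}, splitting into the cases $c=0$ and $c\ne 0$. In each case the structural identities should rigidly determine $f(M)$ up to a Euclidean isometry, after which matching of the pseudohermitian invariants identifies $M$ with the corresponding model.

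\emph{Sasakian case $c=0$.} The $DA=0$ proof already supplies $A(Z,Z)=0$ and $A(Z,T)=TZf=0$, so only $A(Z,\bar Z)$ and $A(T,T)$ can be non-zero. Substituting $c=0$ and $TZf=0$ into the expression for $D_{\bar Z}A(Z,T)$ in Proposition \ref{prop:codazzi} yields
\[
TTf \;=\; Z\bar Zf + \tfrac{\ii}{2}Tf - \ii\bar a\,\bar Zf.
\]
The left-hand side is purely normal since $\nabla_T T=0$ in an adapted frame, while the right-hand side differs from $A(Z,\bar Z)$ (given by Lemma \ref{lemma_2ndFF}) by the tangent vector $-2\ii\bar a\,\bar Zf$; hence $a=0$ and $A(T,T)=A(Z,\bar Z)$, i.e.\ $A=g_\theta\otimes N$ with $N=A(T,T)$. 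The classical rigidity of umbilic isometric immersions together with $DN=0$ then places $f(M)$ in a $4$-dimensional affine subspace as an open subset of a round sphere, and the normalisation $\mathcal{L}_\theta(Z,Z)=1$ fixes the radius at $\sqrt 2$.

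\emph{Non-Sasakian case $c\ne 0$.} Here $a=0$ and $c$ is a non-zero purely imaginary constant, and the $DA=0$ proof already provides $TZf=0$ and $A(Z,T)=0$. An analogous integration-by-parts along $T$, based on \eqref{variouseqs_CRiso} and $\langle Tf,Tf\rangle=1$, further forces $A(T,T)=TTf=0$. With $\nabla_T T=0$ and $TZf=TTf=0$, the Euclidean frame $(Xf,Yf,Tf)$ is translation-invariant along the $T$-orbits, which are then parallel straight lines in $\bR^n$. Quotienting out this common translation direction, the remaining equations from Lemma \ref{lemma_2ndFF} and Proposition \ref{prop:codazzi} describe a circle of radius $1$ (forced by $|c|=1/2$) in a plane transverse to the lines, so $f(M)$ is an open subset of a standard cylinder $S^1\times\bR^2$ lying in a three-dimensional affine subspace of $\bR^n$.

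\emph{Identification and main obstacle.} It remains to produce the local CR-diffeomorphism $\tilde f$ onto $S^3\subset\bC^2$ or onto $S^1+\ii\bR^2\subset\bC^2$, together with the affine factorisation $f=\psi\circ\tilde f$. For this I use the local uniqueness, up to pseudohermitian equivalence, of a three-dimensional strongly pseudo-convex pseudohermitian CR manifold with prescribed invariants $(a,b,c)$ in an adapted frame --- a consequence of the structural equations for $\di\theta$ and $\di\zeta$, Remark \ref{rem:change_of_frame}, and the integration of the resulting Maurer--Cartan system. The standard sphere of radius $\sqrt 2$ carries the invariants $(0,\ii/2,0)$ in an adapted frame, and the standard cylinder carries $(0,\ii/2,\ii/2)$, matching our $M$ in the respective cases. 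The main technical obstacle is the tangential/normal split in the Sasakian case that forces $a=0$: the intervention of the unknown function $a$ in the formulas of Lemma \ref{lemma_2ndFF} and Proposition \ref{prop:codazzi} requires careful bookkeeping. Once $a=0$ is established, the remainder is classical umbilic rigidity in the sphere case and straightforward integration along the $T$-foliation in the cylinder case.
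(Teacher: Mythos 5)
Your proposal does not actually prove the corollary. In the paper, Corollary~\ref{main_corollary} is deduced immediately from Theorem~\ref{main_theorem} (hence the \qed with no separate argument): all the analytic work is local, and the only new content of the corollary is the upgrade from local to global statements under the completeness hypothesis --- that $\tilde f$ is a CR-diffeomorphism \emph{onto} the sphere rather than a local one, that $\tilde f$ is a \emph{covering map} of the cylinder, and that $f(M)$ is the \emph{whole} affine subspace rather than an open domain in one. Your argument never invokes completeness, and every conclusion you reach is still local (``an open subset of a round sphere'', ``an open subset of a standard cylinder''). So even if each step were correct, you would at best have re-proved a version of the local theorem, and the part of the statement that distinguishes the corollary from the theorem is entirely missing.

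The local re-derivation also has substantive gaps. In the non-Sasakian case your conclusion contradicts the paper's: the image $f(M)$ is shown there to be totally geodesic, i.e.\ an open piece of a three-dimensional \emph{affine subspace}; the radius-one cylinder $S^1+\ii\bR^2\subset\bC^2$ is only the abstract pseudohermitian model of $M$ (whose Webster metric is flat), not the Euclidean image --- a $3$-dimensional round cylinder cannot lie inside a $3$-dimensional affine subspace. Moreover the claim that $A(Z,T)=TZf=0$ and $TTf=0$ ``already follow'' is not established when $c=\ii/2$ (the paper's proof of $DA=0$ treats that case by a different commutator argument and does not produce $TZf=0$ there), and you skip the elimination of the other flat parallel submanifolds $T_\lambda$ (products of circles and lines such as $S^1\times S^1\times\bR$), which is precisely where the associated-family identity \eqref{eq_ass_fam2} is used in the paper via the Killing-field lemma; without that step nothing excludes these images. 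In the Sasakian case the step forcing $a=0$ is doubtful: computing directly $D_{\bar Z}(A(Z,T))=A(\nabla_{\bar Z}Z,T)+A(Z,\nabla_{\bar Z}T)=\ii aA(Z,T)+\tfrac\ii2A(T,T)+cA(Z,Z)-\tfrac\ii2A(Z,\bar Z)$ yields, for $c=0$ and $A(Z,T)=0$, only $A(T,T)=A(Z,\bar Z)$ and no constraint on $a$; and indeed $a\equiv0$ would give $K(Z,\bar Z)=-\tfrac14<0$ by \eqref{eq_sect_curv}, contradicting the nonnegative curvature of parallel submanifolds on which the whole classification rests. Finally, the radius $\sqrt2$ must be computed from the Gauss equation (constant curvature of the image), not read off from the normalisation $\mathcal L_\theta(Z,Z)=1$.
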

We fix an adapted pseudohermitian frame $(Z,\bar Z, T)$.
The first step of the proof is computing the sectional curvatures $K(U,V)=\langle R(U,V)\bar V,\bar U\rangle$ of $M$:
\begin{align}\label{eq_sect_curv}
\begin{gathered}
K(Z,\bar Z)=K(X,Y)=|c|^2-\frac14-2|a|^2-\ii Za-\ii\bar Z\bar a,\\
K(Z,T)=K(\bar Z,T)=K(X,T)=K(Y,T)=\frac14-|c|^2.
\end{gathered}
\end{align}

The submanifolds of Euclidean spaces with parallel second fundamental form are exactly the (locally) extrinsic symmetric submanifolds.
By \cite[Theorems 2 and 5]{BackesReckziegel1983}, they are locally isometric to a product of an Euclidean space and spheres. In particular they have nonnegative sectional curvature.
By imposing that the sectional curvature is nonnegative in \eqref{eq_sect_curv}, we first obtain $|c|\leq\frac12$.
Moreover, considering that $a=0$ if $|c|\neq0$, we conclude that either $c=0$ or $|c|=\frac12$.

The next step is showing that the normal bundle of $f(M)$ in $\bR^n$ is flat.

\begin{Def}
We recall that:
\begin{itemize}
\item
The \emph{first normal bundle} $N^1M$ of $M$ is the linear span, in the normal bundle of $M$, of the image of the second fundamental form. It is a vector subbundle of $NM$ on open subsets of $M$ where it has constant rank.
\item
The \emph{osculating bundle} of $M$ is the subbundle $OM=TM+N^1M$ of $f^*T\bR^n$. It is a vector subbundle of $f^*T\bR^n$ precisely where $N^1M$ is a vector subbundle of $NM$.
\end{itemize}
\end{Def}

We have:
\begin{Lem}
The osculating bundle of $M$ has constant rank and is a parallel vector subbundle of $f^*T\bR^n$.
\end{Lem}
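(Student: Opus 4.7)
The plan is to use in an essential way the parallelism $DA=0$ established in the previous subsection. Once this is in hand, both claims of the lemma follow by essentially formal computations.

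First I would address the constant rank of $N^1M$. Fix a point $p\in M$ and any point $q\in M$ joined to $p$ by a smooth curve $\gamma$. Parallel transport (with respect to the Levi--Civita connection $\nabla$) along $\gamma$ an arbitrary basis $e_1,e_2,e_3$ of $T_pM$ to obtain a parallel frame $e_1(t),e_2(t),e_3(t)$ of $TM$ along $\gamma$. Because $DA=0$, the equation
\[
D_{\dot\gamma}\bigl(A(e_i(t),e_j(t))\bigr)=(D_{\dot\gamma}A)(e_i,e_j)+A(\nabla_{\dot\gamma}e_i,e_j)+A(e_i,\nabla_{\dot\gamma}e_j)=0
\]
shows that each $A(e_i(t),e_j(t))$ is $D$-parallel along $\gamma$. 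Hence the parallel transport on $NM$ along $\gamma$ carries $N^1_pM=\mathrm{span}\{A(e_i,e_j)\}_p$ bijectively onto $N^1_qM$, so $\rk N^1M$ is locally (hence globally on each connected component) constant and $N^1M\subset NM$ is a vector subbundle.

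Second, the above computation also shows that $D_V s\in N^1M$ for every $V\in TM$ and every section $s$ of $N^1M$; that is, $N^1M$ is a \emph{parallel} subbundle of $NM$ with respect to the normal connection $D$. Since $TM$ and $N^1M$ are orthogonal (the latter lies in $NM$), we get $OM=TM\oplus N^1M$, which is a vector subbundle of $f^*T\bR^n$ of constant rank $3+\rk N^1M$.

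Third, I would verify parallelism of $OM$ as a subbundle of $f^*T\bR^n$. Denote by $\widetilde\nabla$ the Euclidean connection on $\bR^n$ (written as juxtaposition elsewhere in the paper). For $V\in TM$ and $U\in TM$, the Gau\ss{} formula gives
\[
\widetilde\nabla_V U=\nabla_V U+A(V,U)\in TM\oplus N^1M=OM.
\]
For $V\in TM$ and $\xi\in N^1M$, the Weingarten formula gives
\[
\widetilde\nabla_V\xi=-A_\xi V+D_V\xi,
\]
whose first summand lies in $TM\subset OM$ and whose second summand lies in $N^1M\subset OM$ by the parallelism of $N^1M$ established in the previous step. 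Thus $\widetilde\nabla_V$ preserves sections of $OM$, so $OM$ is a parallel subbundle of $f^*T\bR^n$.

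The only step that requires any care is the first: producing the parallel transport argument that simultaneously establishes constancy of rank and parallelism of $N^1M$. After that, the verification that $OM$ is preserved by the ambient connection is a direct application of the Gau\ss{} and Weingarten formulas, with no further use of the specific CR structure.
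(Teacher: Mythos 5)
Your argument is correct and follows the same route as the paper: the parallelism $DA=0$ gives that $N^1M$ has constant rank and is $D$-parallel in $NM$, and then the osculating bundle is parallel in $f^*T\bR^n$. The only difference is that the paper delegates your third step (the Gau\ss--Weingarten verification that $OM$ is preserved by the ambient connection) to a citation of Erbacher's codimension-reduction theorem, whereas you prove it directly; both are fine.
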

\begin{proof}
First we observe that
the first normal bundle $N^1M$ of $M$ has constant rank on $M$ and is a parallel vector subbundle of $NM$ because the second fundamental form of $M$ is parallel.

The osculating bundle is then a vector bundle on $M$ and the conclusion follows from 
\cite{Erbacher1971b}.
%
\end{proof}

It follows then that $M$ is contained in an affine subspace of $\bR^n$ parallel to $TM+N^1M$. By replacing $\bR^n$ with this subspace we can henceforth assume without loss of generality that $NM=N^1M$.

\begin{Prop}
The normal bundle of $M$ is flat.
\end{Prop}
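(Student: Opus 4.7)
My plan is to reduce the flatness of $R^\perp$ to a direct computation of the intrinsic curvature on the complexified tangent bundle. The associated family Ricci equation \eqref{associated_Ricci} yields $R^\perp(Z,T)\xi=0$ for every normal $\xi$, and by complex conjugation $R^\perp(\bar Z,T)\xi=0$; since $\{Z,\bar Z,T\}$ spans $T^\bC M$, the only remaining component of $R^\perp$ is $R^\perp(Z,\bar Z)$. The reduction preceding the proposition ensures $NM=N^1M$, so it is enough to show that $R^\perp(Z,\bar Z)$ annihilates each generator $A(X,Y)$ with $X,Y\in\{Z,\bar Z,T\}$.

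The main tool will be the Bianchi-type identity
\[
R^\perp(U,V)A(X,Y)=A(R(U,V)X,Y)+A(X,R(U,V)Y),
\]
obtained by computing $[D_U,D_V]A(X,Y)$ in two ways: via the Ricci identity $[D_U,D_V]=R^\perp(U,V)+D_{[U,V]}$ on one side, and via the Leibniz rule applied to $DA=0$ (which holds by the previous proposition) on the other. To apply it with $U=Z$, $V=\bar Z$, I will compute $R(Z,\bar Z)$ in the adapted frame using the Levi-Civita formulas listed in the remark after the definition of adapted frame, together with $[Z,\bar Z]=-\ii T-\ii aZ-\ii\bar a\bar Z$. A direct calculation yields $R(Z,\bar Z)Z=K(Z,\bar Z)Z$ and $R(Z,\bar Z)T=0$ in both admissible cases $c=0$ and $|c|=\tfrac12$; since $K(Z,\bar Z)$ is a real sectional curvature, complex conjugation gives $R(Z,\bar Z)\bar Z=-K(Z,\bar Z)\bar Z$.

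Plugging these into the Bianchi identity, $R^\perp(Z,\bar Z)A(T,T)=0$ trivially, and $R^\perp(Z,\bar Z)A(Z,\bar Z)=(K(Z,\bar Z)-\overline{K(Z,\bar Z)})A(Z,\bar Z)=0$ by reality of $K$. For the remaining generators one obtains
\[
R^\perp(Z,\bar Z)A(Z,Z)=2K(Z,\bar Z)A(Z,Z),\qquad R^\perp(Z,\bar Z)A(Z,T)=K(Z,\bar Z)A(Z,T),
\]
and both vanish: in the Sasakian case $c=0$ one has $A(Z,Z)=A(Z,T)=0$ (already established in the parallelity subsection, via equation \eqref{computation_(D_ZA(ZZ))}), while in the case $|c|=\tfrac12$ one has $a=0$ by Proposition~\ref{values_abc}, so $K(Z,\bar Z)=|c|^2-\tfrac14=0$. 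The only nonroutine step is the explicit computation of $R(Z,\bar Z)$ in the adapted frame; the cancellation of the $\bar Z$- and $T$-components of $R(Z,\bar Z)Z$ relies on the detailed bracket and connection relations, but is otherwise a direct, if slightly tedious, computation.
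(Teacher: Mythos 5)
Your proof is correct, and it follows the same overall skeleton as the paper's: equation \eqref{associated_Ricci} kills $R^\perp(Z,T)$ and, by conjugation, $R^\perp(\bar Z,T)$, and the reduction $NM=N^1M$ leaves only the task of checking that $R^\perp(Z,\bar Z)$ annihilates the generators $A(X,Y)$. Where you genuinely diverge is in how that last check is carried out. The paper simply invokes ``explicit computations using Proposition~\ref{prop:codazzi}'', i.e.\ it grinds $[D_Z,D_{\bar Z}]A(X,Y)-D_{[Z,\bar Z]}A(X,Y)$ out of the explicit formulas for $D_{X_1}A(X_2,X_3)$. You instead exploit the fact, already proved, that $DA=0$, so that the curvature identity for a parallel section of $S^2T^*M\otimes NM$ converts the normal curvature acting on $A(X,Y)$ into the tangential curvature acting on the arguments, $R^\perp(Z,\bar Z)A(X,Y)=A(R(Z,\bar Z)X,Y)+A(X,R(Z,\bar Z)Y)$. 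The only computation left is $R(Z,\bar Z)$ in the adapted frame, and your claims $R(Z,\bar Z)Z=K(Z,\bar Z)Z$, $R(Z,\bar Z)T=0$ check out (the $\bar Z$- and $T$-components of $R(Z,\bar Z)Z$ do cancel, using $a\bar c=0$ from Proposition~\ref{values_abc}). The case analysis is then clean: for $c=0$ one has $A(Z,Z)=A(Z,T)=0$ (from \eqref{computation_(D_ZA(ZZ))} and the formula for $D_ZA(Z,T)$, exactly as in the parallelity proof), and for $|c|=\tfrac12$ one has $a=0$ hence $K(Z,\bar Z)=0$, so the two potentially nonzero outputs $2K(Z,\bar Z)A(Z,Z)$ and $K(Z,\bar Z)A(Z,T)$ vanish in both cases. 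Your route buys a structural explanation for why the flatness holds (it is forced by $DA=0$ together with the degeneracy of the tangential curvature in each admissible case), at the cost of having to compute $R(Z,\bar Z)$ on the frame; the paper's route avoids that curvature computation but hides the mechanism inside an unexhibited brute-force calculation.
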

\begin{proof}
By equation \eqref{associated_Ricci} we have $R^\perp(Z,T)=0$ and consequently $R^\perp(\bar Z, T)=0$.

Explicit computations using 
Proposition \ref{prop:codazzi}, show that $R^\perp(Z,\bar Z)A(U,V)=0$ for all $U,V\in TM$, if $c=0,\frac\ii2$.
Since the normal bundle is spanned by vectors of the form $A(U,V)$, the normal bundle is flat.
\end{proof}

We also recall that the mean curvature of $f(M)$ in $\bR^n$ is $H=2A(Z,\bar Z)+A(T,T)$. By Remark \ref{rem_meancurv}  the mean curvature vector is parallel in the normal bundle.
By \cite[Theorem~1]{Erbacher1971a}  the immersion is locally a product of standard immersions $\bR\to\bR$, $S^1\to\bR^2$, $S^2\to\bR^3$, or $S^3\to\bR^4$.

To conclude the proof of Theorem \ref{main_theorem}, we consider separately the two cases $c=0$ and $c=\frac{\ii}{2}$.

\paragraph{Case $c=0$.}
The manifold $M$ is not flat, hence  it is locally isometric either to a $3$-dimensional sphere or to a product $S^2\times\bR$.
We argue  that $M$ cannot be locally isometric to $S^2\times\bR$. If it was, then there would be a nonzero vector $U\in TM$ such that $K(U,V)=0$ for every tangent vector $V\in TM$.
However, $K(X,T)\neq0$, so U cannot be proportional to $T$, and $K(U,T)=\frac14$ for every unit vector $U$ in the plane generated by $X$ and $Y$.

The manifold $M$ is then locally isometric to the sphere. By \cite{BV1987} the only Sasakian structure on the round three-sphere is the standard one, that is  the CR structure and the metric are those induced by the embedding in $\bC^2$ as
\[ S^3=\{z\in\bC^2\mid\|z\|^2=2\},\]
and the pseudohermitian form $\theta$ is given by $\theta(v)=\frac1{\sqrt{2}}\Im\langle \bar z,v\rangle$ for all $z\in S^3$ and $v\in T_zS^3$.
The theorem is proved in this case.

\paragraph{Case $c=\frac{\ii}{2}$.}
In this case $a=0$, all the sectional curvatures vanish, and the manifold $M$ is flat. Again by \cite[Theorem~1]{Erbacher1971a} the immersion is a product of standard immersions of factors of the form $\bR$ or $S^1$. We can then assume that the codimension is $3$, i.e. that $n=6$.
Up to isometries of $\bR^6$, the image $f(M)$ is (locally) the submanifold:
\[ T_{\lambda}=\left\{x\in\bR^6\left\vert \begin{aligned}&x_{2j-1}^2+(x_{2j}-\lambda_j^{-1})^2=\lambda_j^{-2},&&\text{if $\lambda_j\neq0$,}\\
&x_{2j-1}=0,&&\text{if $\lambda_j=0$,}\end{aligned}\quad j=1\ldots3\right\}\right. \]
for some $\lambda=(\lambda_1,\lambda_2,\lambda_3)$ with $0\leq\lambda_1\leq\lambda_2\leq\lambda_3$.

We recall that for an associated family $f_\varphi$, the second fundamental form satisfies the identity:
\begin{equation}\label{eq_ass_fam2}
\iota_{\varphi}(A^{f_\varphi}(U,V))=A(r_{\varphi}U,r_{\varphi}V),
\end{equation}
for all $U,V\in TM$. The image $f_\varphi(M)$ is then again of the form $T_{\lambda(\varphi)}$. The vector valued bilinear forms $A$ and $A^{f_\varphi}$ are conjugated under the action of $\mathrm{O}(TM)\times\mathrm{O}(NM)$ if and only if
$\lambda(\varphi)=\lambda$. We then conclude that $\lambda(\varphi)=\lambda$ is constant for all $\varphi$.

\begin{Lem}
For all $0\leq\lambda_1\leq\lambda_2\leq\lambda_3$, with $\lambda_2>0$,
the Lie algebra $\mathfrak{isom}(\bR^6,T_\lambda,0)$ of germs of Killing vector fields on $\bR^6$, tangent to $T_\lambda$ and vanishing in $0$ is $\{0\}$.
\end{Lem}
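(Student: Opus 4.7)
The plan is to reduce the lemma to a finite-dimensional linear algebra problem at the origin. Any germ of a Killing field $X$ on $\bR^6$ vanishing at $0$ has the unique form $X(p) = Ap$ with $A \in \so(6)$, so the task reduces to showing that if $Ap \in T_p T_\lambda$ for every $p$ near $0$ in $T_\lambda$, then $A = 0$.

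I would parametrize $T_\lambda$ near the origin as the graph of a smooth map $\psi\colon T_0 T_\lambda \to N_0 T_\lambda$ with $\psi(0) = 0$ and $d\psi(0) = 0$, so that $\psi(U) = \tfrac{1}{2} B(U,U) + O(|U|^3)$, where $B$ is the second fundamental form of $T_\lambda$ at the origin. Relative to the orthogonal splitting $\bR^6 = T_0 T_\lambda \oplus N_0 T_\lambda$, write $A$ in block form. Matching the tangency condition $Ap(U) \in T_{p(U)} T_\lambda$ order by order in $U$ produces two successive constraints: at first order, the block of $A$ sending $T_0 T_\lambda$ into $N_0 T_\lambda$ must vanish, so by skew-symmetry $A = A_{TT} \oplus A_{NN}$ with $A_{TT}$ and $A_{NN}$ skew on their respective summands; at second order one obtains the infinitesimal invariance identity
\[
A_{NN}\,B(U,V) \;=\; B(A_{TT}U, V) + B(U, A_{TT}V), \qquad U,V \in T_0 T_\lambda.
\]

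The argument then concludes by plugging in the explicit, diagonal form of $B$ coming from the product structure $T_\lambda = C_1 \times C_2 \times C_3$. When $\lambda_1 > 0$, pick bases $T_0 T_\lambda = \mathrm{span}(e_1, e_3, e_5)$, $N_0 T_\lambda = \mathrm{span}(e_2, e_4, e_6)$, with $B(e_{2i-1}, e_{2j-1}) = \delta_{ij}\lambda_i e_{2i}$. When $\lambda_1 = 0$ the first factor is a line, so $e_1$ and $e_2$ swap roles between the tangent and normal splittings, and $B$ acquires a zero eigendirection along $e_2$. Substituting $U = V = e_{2j-1}$ with $\lambda_j > 0$ forces $A_{NN} e_{2j} = 0$ (the right-hand side vanishes by skew-symmetry of $A_{TT}$), which together with $\lambda_2, \lambda_3 > 0$ yields $A_{NN} = 0$; polarizing and substituting $U \neq V$ then forces each off-diagonal entry of $A_{TT}$ to vanish, using linear independence of the $e_{2k}$ and again $\lambda_2, \lambda_3 > 0$.

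The only delicate point is separating the cases $\lambda_1 = 0$ and $\lambda_1 > 0$: in the degenerate case the tangent and normal splittings at the origin are rearranged, and one must verify that the argument still uses only $\lambda_2, \lambda_3 > 0$ (which is exactly the hypothesis $\lambda_2 > 0$). Otherwise the entire argument is elementary linear algebra driven by the diagonal structure of $B$ and the strict positivity of $\lambda_2$ and $\lambda_3$.
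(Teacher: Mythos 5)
Your proof is correct, but it takes a genuinely different route from the paper. The paper extends the germ to a global Killing field, observes (for $\lambda_1>0$) that the unique point $o=(0,\lambda_1^{-1},0,\lambda_2^{-1},0,\lambda_3^{-1})$ equidistant from all of $T_\lambda$ must also be a zero of the field, translates by $o$ so the field becomes a linear element of $\so(6)$ preserving the three quadrics that cut out $T_\lambda$, and concludes by a conformality argument with respect to the three diagonal bilinear forms. You instead stay entirely local at the origin: you linearize there ($X(p)=Ap$, $A\in\so(6)$), and extract from the $2$-jet of the tangency condition the block-diagonality of $A$ and the infinitesimal invariance identity $A_{NN}B(U,V)=B(A_{TT}U,V)+B(U,A_{TT}V)$ for the second fundamental form, which you then kill using the diagonal structure of $B$ and $\lambda_2,\lambda_3>0$. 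Your version buys a more elementary, self-contained argument that treats $\lambda_1=0$ and $\lambda_1>0$ almost uniformly and avoids both the global-extension step and the equidistant-point argument; the paper's version is shorter once one accepts the invariance of the defining quadrics. One small point worth spelling out in your write-up: when $\lambda_1=0$ the substitutions $U=V=e_{2j-1}$ only give $A_{NN}e_4=A_{NN}e_6=0$ directly, and you need the remark that a skew endomorphism of the three-dimensional normal space annihilating a two-dimensional subspace is zero to conclude $A_{NN}e_1=0$; similarly, the vanishing of the $A_{TT}$-entries involving the flat tangent direction $e_2$ comes from the mixed substitutions $U=e_2$, $V=e_3$ and $U=e_2$, $V=e_5$, using $B(e_2,\cdot)=0$.
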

\begin{proof}
First we notice that every germ of Killing vector field in $\mathfrak{isom}(\bR^6,T_\lambda,0)$ extends to a global Killing vector field on $\bR^6$, tangent to $T_\lambda$ and vanishing in $0$.

If $\lambda_1>0$, the set of points in $\bR^6$ equidistant from all points in $T_\lambda$ consists of the single point $o=(0,\lambda_1^{-1},0,\lambda_2^{-1},0,\lambda_3^{-1})$. Every element of $\mathfrak{isom}(\bR^6,T_\lambda,0)$ hence vanishes in $o$.

\begin{sloppypar}
Let $\tilde v\in\mathfrak{isom}(\bR^6,T_\lambda,0)$ and $v=\tau_o\circ\tilde v\circ\tau_{-o}$, where $\tau_o$ and $\tau_{-o}$ denote the translations of vector $o$ and $-o$ respectively. Then $v\colon\bR^6\to\bR^6$ is a linear map in $\mathfrak{so}(\bR^6)$, conformal with respect to the symmetric bilinear forms $\mathrm{diag}(-2\lambda_1,-2\lambda_1,\lambda_2,\lambda_2,\lambda_3,\lambda_3)$, $\mathrm{diag}(\lambda_1,\lambda_1,-2\lambda_2,-2\lambda_2,\lambda_3,\lambda_3)$ and $\mathrm{diag}(\lambda_1,\lambda_1,\lambda_2,\lambda_2,-2\lambda_3,-2\lambda_3)$, and vanishing on $o$. It easily follows that $v=0$.
\end{sloppypar}

The case $\lambda_1=0$ is treated similarly.
\end{proof}


\begin{Lem}
For an associated family $f_\varphi$, we have $\lambda_1=\lambda_2=0$.
\end{Lem}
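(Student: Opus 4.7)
The plan is to argue by contradiction: assume $\lambda_2 > 0$ and use the preceding lemma to reduce the associated family to an ``infinitesimal reparametrization'' of $f_0$, then derive an algebraic obstruction from the differentiated associated family identity.

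Since $\lambda(\varphi) = \lambda$ does not depend on $\varphi$, each $f_\varphi(M)$ is locally a rigid translate of the fixed model $T_\lambda \subset \bR^6$. Writing smoothly $f_\varphi = g_\varphi \circ \pi_\varphi$ with $g_\varphi \in \mathrm{Isom}(\bR^6)$, $g_0 = \mathrm{id}$, and $\pi_\varphi \colon M \to T_\lambda$ a local isometric immersion into the fixed model ($\pi_0 = f_0$), I would fix a base point $x_0 \in M$ and normalize $g_\varphi$ on the right by the group of $T_\lambda$-symmetries extending to rigid motions of $\bR^6$. The rigidity of local isometries of the flat manifold $T_\lambda$ (determined by their $1$-jet at any point) then leaves as the only residual variation of $\pi_\varphi$ the flow of a Killing field $W$ on $M$ vanishing at $x_0$, giving
\[ \dot f_0 = \dot g_0 \circ f_0 + df_0 \cdot W, \]
where $\dot g_0$ is a Killing vector field on $\bR^6$ tangent to $T_\lambda$ and vanishing at $f_0(x_0)$. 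Translating $f_0(x_0)$ to the origin and invoking the preceding lemma under the hypothesis $\lambda_2 > 0$ forces $\dot g_0 = 0$, and consequently $\dot f_0 = df_0 \cdot W$: the family $f_\varphi$ is, to first order at $\varphi = 0$, nothing more than a reparametrization of $f_0$ by an isometric flow of $M$.

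Differentiating the associated family identity $\iota_\varphi(A^{f_\varphi}(U,V)) = A(r_\varphi U, r_\varphi V)$ at $\varphi = 0$ and substituting $\dot f_0 = df_0 \cdot W$ then produces the algebraic relation
\[ (\Lie_W A)(U,V) + \dot\iota_0(A(U,V)) = A(\dot r_0 U, V) + A(U, \dot r_0 V), \]
where $\dot r_0 Z = \ii Z$, $\dot r_0 \bar Z = -\ii\bar Z$, $\dot r_0 T = 0$, and $\dot\iota_0$ is skew-adjoint on the normal bundle. The expected main obstacle is this last algebraic step. Any Killing field $W$ on the flat $M$ vanishing at $x_0$ acts on $T_{x_0}M$ by an infinitesimal rotation, but the structure equations of $M$ (notably $[X,T] = Y$ and $[Y,T] = 0$ in an adapted frame) prevent the horizontal rotation $\dot r_0$ in the $(X,Y)$-plane from being realized as such a derivation. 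Combining this with the explicit second fundamental form from Lemma~\ref{lemma_2ndFF} (with $a = 0$, $c = \ii/2$) and the fact that the shape operators of $T_\lambda$ have $\lambda_1, \lambda_2, \lambda_3$ as eigenvalues along three pairwise orthogonal normal directions, the identity can only be compatible when the principal curvatures in the $(X,Y)$-plane vanish. This forces $\lambda_1 = \lambda_2 = 0$, contradicting the assumption $\lambda_2 > 0$.
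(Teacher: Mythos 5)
Your opening move coincides with the paper's: once $\lambda(\varphi)=\lambda$ is known to be constant, the congruence of the images turns \eqref{eq_ass_fam2} into an identity on the fixed model $T_\lambda$ involving an ambient isometry preserving $T_\lambda$, and the preceding lemma on $\mathfrak{isom}(\bR^6,T_\lambda,0)$ (for $\lambda_2>0$) trivializes that isometry. From there, however, you differentiate at $\varphi=0$, and the decisive step --- extracting a contradiction from $(\Lie_W A)(U,V)+\dot\iota_0(A(U,V))=A(\dot r_0U,V)+A(U,\dot r_0V)$ --- is precisely the part you flag as ``the expected main obstacle'' and only sketch; as stated it does not go through. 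The heuristic you offer, that the bracket relations $[X,T]=Y$, $[Y,T]=0$ prevent the horizontal rotation $\dot r_0$ from being realized as the derivative of a Killing flow of $M$, is not the right obstruction: in the case at hand $M$ is flat, so \emph{every} skew-symmetric endomorphism of $T_{x_0}M$, including $\dot r_0=J$, equals $\nabla W|_{x_0}$ for some metric Killing field $W$ vanishing at $x_0$. (The failure of $r_\varphi$ to preserve the CR structure is irrelevant, since the definition of associated family constrains only the second fundamental forms of the $f_\varphi$, not their CR geometry.) You also never dispose of the terms $\Lie_WA$ and $\dot\iota_0$, and the claim that after normalization the residual freedom in $\pi_\varphi$ is exactly the flow of a Killing field of $M$ is unargued: two local isometric immersions onto the flat submanifold $T_\lambda$ differ by a local isometry of the abstract flat metric, a much larger group than the extrinsic symmetries of $T_\lambda$.

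The mechanism that actually closes the argument is the rank-one structure of the shape operators of $T_\lambda$, which you mention but never bring to bear. Once $\psi_\varphi$ (hence $\iota_\varphi$) is the identity, one should use the identity for \emph{all} $\varphi$ rather than its derivative: $A(U,U)=A(r_\varphi U,r_\varphi U)$ says that $A(U,U)/\|U\|^2$ is constant on $HM\setminus\{0\}$, so $A(U,V)=\lambda\langle U,V\rangle\,\xi$ on $HM$ for some $\lambda\ge0$ and unit normal $\xi$. Since $T_\lambda$ is a product of circles and lines, there are orthonormal normal directions $\xi_1,\xi_2,\xi_3$ with $\rk A_{\xi_j}\le1$; but $A_{\xi_j}|_{HM}=\lambda\langle\xi,\xi_j\rangle\,\Id$ has rank $0$ or $2$ on the two-plane $HM$, so it must vanish for each $j$, forcing $\xi=0$ --- whereas $\lambda>0$ because some $A_\eta$ is positive definite on a two-plane when $\lambda_2>0$. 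That is the contradiction. If you insist on the infinitesimal route you must first prove $W=0$ and $\dot\iota_0=0$; only then does your derivative identity reduce to $A(JU,V)+A(U,JV)=0$ on $HM$, which recovers the same normal form for $A|_{HM}$ --- extra work for no gain over the one-parameter identity.
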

\begin{proof}
Since all the images $f_\varphi(M)$ are locally congruent up to isometries of $\bR^6$, we can rewrite \eqref{eq_ass_fam2} as
\[
d\psi_{\varphi}(A(d\psi_\varphi(U),d\psi_\varphi(V)))=A(r_{\varphi}U,r_{\varphi}V),
\]
for some $\psi_\varphi\in\mathrm{Isom}(\bR^6)$ that preserves $T_\lambda$.

Assume $\lambda_2>0$.
The previous Lemma implies that $\psi_{\varphi}$ is (conjugated to) the identity.
It follows that $\frac{A(U,U)}{\|U,U\|}$ is constant for $U\in HM\setminus\{0\}$, hence $A|_{HM}=\lambda I \otimes\xi$ for some $\lambda\geq0$ and $\xi\in NM$ with $\|\xi\|=1$. Actually $\lambda>0$ because there exists some $\xi\in NM$ such that $A_{\xi}$ has signature $(2,0)$.

There exist three orthonormal vectors $\xi_1,\xi_2,\xi_3$ such that $\mathrm{rk}\,A_{\xi_j}=1$, for $j=1,2,3$, and thus $A_{\xi_j}|_{HM}=0$. But this would imply that $A_\xi|_{HM}=0$, yielding a contradiction. We can then conclude that $\lambda_2=\lambda_1=0$.
\end{proof}

\begin{Lem}
If the immersion $f$ admits an associated family $f_\varphi$, then $\lambda_1=\lambda_2=\lambda_3=0$.
\end{Lem}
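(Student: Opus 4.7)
The plan is to argue by contradiction, supposing $\lambda_3 > 0$, and to exploit the rigidity of both the associated family condition and the Levi-Civita connection of the cylinder CR structure.

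Since $\lambda_1 = \lambda_2 = 0$ by the preceding lemma, after the reduction to $NM = N^1 M$ the image $f(M)$ is locally an open piece of the cylinder $\bR^2 \times S^1_{1/\lambda_3}$ inside a $4$-dimensional affine subspace of $\bR^n$, with one-dimensional normal bundle $NM = \bR\nu$. The second fundamental form takes the rank-one form
\[ A(U,V) = \lambda_3 \langle U, e\rangle \langle V, e\rangle\, \nu, \]
where $e\in\Gamma(TM)$ is the unit tangent vector field along the $S^1$-factor. The crucial observation is that $e$ is $\nabla$-parallel on $M$, since the intrinsic metric of $\bR^2 \times S^1$ is the flat product metric and the unit tangent to a factor is then covariantly constant. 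The same structural description applies to each $f_\varphi$: because $\lambda(\varphi) \equiv \lambda$ is constant, $A^{f_\varphi}(U,V) = \lambda_3 \langle U, e^{(\varphi)}\rangle \langle V, e^{(\varphi)}\rangle\, \nu^{(\varphi)}$ with $e^{(\varphi)}\in\Gamma(TM)$ also parallel and of unit norm.

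Substituting these expressions into $\iota_\varphi A^{f_\varphi}(U,V) = A^{f_0}(r_\varphi U, r_\varphi V)$, and noting that on a one-dimensional normal bundle a parallel bundle isomorphism acts as $\pm 1$ between unit normals, I would match the two rank-one symmetric bilinear forms on $TM$ and conclude $e^{(\varphi)} = \pm\, r_{-\varphi} e$. Continuity in $\varphi$ selects the $+$ sign, so the problem reduces to the single statement: the pointwise-rotated field $r_{-\varphi} e$ is $\nabla$-parallel on $M$ for every $\varphi\in\bR$.

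To extract the contradiction, I would decompose $e = \alpha X + \beta Y + \gamma T$ in the real orthonormal adapted frame, with $\alpha^2 + \beta^2 + \gamma^2 = 1$, so that $r_{-\varphi} e = (\alpha\cos\varphi + \beta\sin\varphi) X + (-\alpha\sin\varphi + \beta\cos\varphi) Y + \gamma T$. A short computation using the Levi-Civita formulas of the adapted frame (with $a=0$, $b = c = \ii/2$) yields
\[ \nabla X = 0, \qquad \nabla_U Y = -\omega^X(U)\, T, \qquad \nabla_U T = \omega^X(U)\, Y, \]
where $\omega^X$ is the $1$-form dual to $X$. Imposing $\nabla_U(r_{-\varphi} e) = 0$, then separating into $X$, $Y$, $T$ components and into the $\cos\varphi$ and $\sin\varphi$ Fourier modes, produces a cascade: the $X$-component forces $U\alpha = U\beta = 0$, so $\alpha$ and $\beta$ are constants on $M$; the $Y$-component then gives $\gamma\,\omega^X \equiv 0$, hence $\gamma = 0$; and the $T$-component gives $(\alpha\sin\varphi - \beta\cos\varphi)\,\omega^X \equiv 0$ for every $\varphi$, forcing $\alpha = \beta = 0$. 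This contradicts $\|e\| = 1$ and proves $\lambda_3 = 0$. The main subtlety is organizing the Fourier decomposition in $\varphi$ so that the cascade closes cleanly; the whole mechanism rests on the single non-vanishing connection coefficient $\nabla_X T = Y$, itself a direct consequence of $c = \ii/2 \neq 0$, which obstructs the existence of any continuous family of parallel vector fields obtained from $e$ by pointwise rotation — precisely what distinguishes the present case from the $c = 0$ case treated earlier.
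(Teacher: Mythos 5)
Your proof is correct, but it takes a genuinely different route from the paper's. The paper argues as follows: with $\lambda_1=\lambda_2=0<\lambda_3$ the codimension is one, the shape operator $A_\xi$ has rank one, and the associated-family condition forces $A(U,U)/\|U\|^2$ to be constant on $HM\setminus\{0\}$, hence $A|_{HM}=\lambda\,\mathrm{Id}\otimes\xi$; rank one on a two-dimensional subspace then forces $A|_{HM}=0$, so $HM$ coincides with the kernel distribution of $A_\xi$, which is integrable (it is the tangent distribution to the flat $\bR^2$-factors), contradicting the non-integrability of the contact distribution $HM$. You instead isolate the parallel unit field $e$ spanning the image of $A$, show that the associated-family identity forces $r_{-\varphi}e$ to be $\nabla$-parallel for every $\varphi$, and kill this by a Fourier-in-$\varphi$ cascade against the explicit connection $\nabla X=0$, $\nabla_U Y=-\omega^X(U)T$, $\nabla_U T=\omega^X(U)Y$ of the adapted frame (which I checked; it is right, and your cascade correctly disposes of all cases including $e=\pm T$). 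Both arguments detect the same obstruction --- $HM$ cannot lie in the parallel, integrable distribution $e^\perp$ --- but the paper's is shorter and coordinate-free, while yours is fully explicit and has the merit of not importing the constancy of $A(U,U)/\|U\|^2$ from the previous lemma (where it was justified via the Killing-field argument under the hypothesis $\lambda_2>0$, and is carried over here only with a terse ``it follows also in this case''); you re-derive the needed rigidity directly from the rank-one structure of $A$. One small caveat: your closing remark that the non-vanishing of $\nabla_X T$ ``is a direct consequence of $c=\ii/2\neq0$'' and distinguishes this case from $c=0$ is not quite accurate --- with $c=0$ one still has $\nabla_X T=Y/2\neq0$ --- but this is side commentary and does not affect the proof.
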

\begin{proof}
Assume $0=\lambda_1=\lambda_2<\lambda_3$. Then $M=\bR^2\times S^1_{\lambda_3}\subset\bR^4$. We fix an unit normal vector field $\xi$. Then $\mathrm{rk}\,A_\xi=1$. Since the codimension of $M$ is $1$, and $\frac{A_\xi(U,U)}{\|U,U\|}$ is constant for $U\in HM\setminus\{0\}$, it follows also in this case that $\frac{A(U,U)}{\|U,U\|}$ is a constant multiple of $\xi$ for $U\in HM\setminus\{0\}$, and that $A_{HM}=0$. However the subbundle of totally isotropic vectors for $A_\xi$ is integrable, and $HM$ is not, giving a contradiction.\end{proof}

Finally, if $\lambda_1=\lambda_2=\lambda_3=0$ then $f(M)$ is locally a three-dimensional affine subspace, and \eqref{eq_ass_fam2} is trivially satisfied. The theorem is proved also in the case $c=\frac\ii2$.

\end{document}